\theoremstyle{plain}
\newtheorem{thm}{Theorem}[section]
\newtheorem{prp}[thm]{Proposition}
\newtheorem{lem}[thm]{Lemma}
\newtheorem{cor}[thm]{Corollary}
\newtheorem{clm}{Claim}
\newtheorem*{thm-nn}{Theorem}
\newtheorem*{prp-nn}{Proposition}
\newtheorem*{lem-nn}{Lemma}
\newtheorem*{cor-nn}{Corollary}
\newtheorem*{clm-nn}{Claim}
\newtheorem*{cnj-nn}{Conjecture}
\newtheorem*{prb-nn}{Problem}
\theoremstyle{definition}
\newtheorem{dfn}[thm]{Definition}
\newtheorem{exm}[thm]{Example}
\newtheorem*{dfn-nn}{Definition}
\newtheorem{rmk}[thm]{Remark}
\newtheorem{ntn}[thm]{Notation}
\newcommand{\xyR}[1]{%
\xydef@\xymatrixrowsep@{#1}}
\newcommand{\xyC}[1]{%
\xydef@\xymatrixcolsep@{#1}}
\def\al{\alpha}
\def\be{\beta}
\def\ga{\gamma}
\def\de{\delta}
\def\ep{\varepsilon}
\def\ze{\zeta}
\def\et{\eta}
\def\th{\theta}
\def\ro{\rho}
\def\si{\sigma}
\def\ph{\phi}
\def\ps{\psi}
\def\De{\Delta}
\def\Ps{\Psi}
\def\Th{\Theta}
\def\Mod{\operatorname{Mod}}
\def\prj{\operatorname{prj}}
\def\Prj{\operatorname{Prj}}
\def\thick{\operatorname{thick}}
\def\Kb{{\mathcal K}^{\text{\rm b}}}
\def\calA{{\mathcal A}}
\def\calB{{\mathcal B}}
\def\calC{{\mathcal C}}
\def\calD{{\mathcal D}}
\def\calK{{\mathcal K}}
\def\calT{{\mathcal T}}
\def\calU{{\mathcal U}}
\def\calV{{\mathcal V}}
\def\bbZ{{\mathbb Z}}
\def\op{^{\mathrm{op}}} 
\def\inv{^{-1}}
\def\incl{\hookrightarrow}
\def\iso{\cong}
\def\ds{\oplus}
\def\ox{\otimes}
\def\Lox{\overset{\mathbf{L}}{\otimes}}
\def\udl{\underline}
\def\ovl{\overline}
\def\Ds{\bigoplus}
\def\dsm#1,#2..#3{\bigoplus_{{#1}={#2}}^{#3}}
\def\sm#1,#2..#3{\sum_{{#1}={#2}}^{#3}}
\def\id{1\kern-.25em{\text{{\rm l}}}} 
\def\isoto{\ \raise.8ex\hbox{$^{\sim}$}\kern-.7em\hbox{$\to$}\ } 
\def\ya#1{\xrightarrow{#1}}
\def\blank{\operatorname{-}}
\def\bg{%
\family{cmr}\size{20}{12pt}\selectfont}
\def\bigzerou{%
\smash{\lower1.7ex\hbox{\bg 0}}}
\def\repr[#1;#2;#3;#4;#5]{
\left(
\begin{matrix}#1\\#2\end{matrix}
#3
\begin{matrix}#4\\#5\end{matrix}
\right)}
\def\bmat#1{\begin{bmatrix} #1 \end{bmatrix}}
\def\smat#1{\begin{smallmatrix} #1 \end{smallmatrix}}
\def\sic{\operatorname{sic}}
\def\kCat{\Bbbk\text{-}\mathbf{Cat}}
\def\kAb{\Bbbk\text{-}\mathbf{Ab}}
\def\kTri{\Bbbk\text{-}\mathbf{Tri}}
\def\kModCat{\k\text{-}\mathbf{ModCat}}
\def\kadd{\Bbbk\text{-}\mathbf{add}}
\def\coop{^{\mathrm{coop}}}
\def\bfj{\mathbf{j}}
\def\bfp{\mathbf{p}}
\def\bfB{\mathbf{B}}
\def\bfC{\mathbf{C}}
\def\bfD{\mathbf{D}}
\def\bfL{\mathbf{L}}
\def\Gr{\operatorname{Gr}}
\def\k{\Bbbk}
\def\Colax{\overleftarrow{\operatorname{Colax}}}
\def\com{\operatorname{com}}
\def\To{\Rightarrow}
\begin{document}

\title{Gluing derived equivalences together}

\author{Hideto Asashiba}

\begin{abstract}
The Grothendieck construction of a diagram $X$ of categories
can be seen as a process to construct a single category $\Gr(X)$
by gluing categories in the diagram together.
Here we formulate diagrams of categories as colax functors
from a small category $I$ to the 2-category $\kCat$ of small $\k$-categories
for a fixed commutative ring $\k$.
In our previous paper we defined derived equivalences of those colax functors.
Roughly speaking two colax functors $X, X' \colon I \to \kCat$ are
derived equivalent
if there is a derived equivalence from $X(i)$ to $X'(i)$ for all objects $i$ in $I$
satisfying some ``$I$-equivariance'' conditions.
In this paper we glue the derived equivalences
between $X(i)$ and $X'(i)$ together to obtain a derived equivalence between Grothendieck constructions $\Gr(X)$ and $\Gr(X')$,
which shows that
if colax functors are derived equivalent, then
so are their Grothendieck constructions.
This generalizes and well formulates
the fact that if two $\k$-categories with a $G$-action for a group $G$ are
``$G$-equivariantly'' derived equivalent, then
their orbit categories are derived equivalent.
As an easy application we see by a unified proof
that if two $\Bbbk$-algebras
$A$ and $A'$ are derived equivalent, then so are
the path categories $AQ$ and $A'Q$ for any quiver $Q$;
so are the incidence categories $AS$ and $A'S$ for
any poset $S$; and so are the monoid algebras
$AG$ and $A'G$ for any monoid $G$.
Also we will give examples of gluing of many smaller derived equivalences together
to have a larger derived equivalence.
\end{abstract}

\subjclass[2000]{18D05, 16W22, 16W50 }

\thanks{This work is partially supported by Grant-in-Aid for Scientific Research
(C) 21540036 from JSPS}

\email{shasash@ipc.shizuoka.ac.jp}
\address{Department of Mathematics, Faculty of Science, Shizuoka University,
836 Ohya, Suruga-ku, Shizuoka, 422-8529, Japan.}
\maketitle

{\bf Keywords:} Grothendieck constructions, 2-categories, lax functors, colax functors,
pseudofunctors, derived equivalences
\section{Introduction}

Under the preparations in \cite{Asa-a} we complete our project of the title in this paper.
We fix a small category $I$ and a commutative ring $\Bbbk$ and
denote by $\kCat$ (resp.\ $\kAb$, $\kTri$)
the 2-category of small $\Bbbk$-categories
(resp.\ abelian $\k$-categories, triangulated $\k$-categories).
For a $\k$-category $\calC$ a (right) $\calC$-{\em module} is a contravariant functor from
$\calC$ to the category $\Mod \k$ of $\k$-modules,
and we denote by $\Mod \calC$ (resp.\ $\Prj \calC$, $\prj \calC$) the category of $\calC$-modules
(resp.\ projective $\calC$-modules, finitely generated projective $\calC$-modules).
When we deal with derived equivalences, we usually assume that
$\k$ is a field because Keller's theorem in \cite{Ke1} or \cite{Ke2}
on derived equivalences of categories
requires that the $\k$-categories in consideration are $\k$-flat or $\k$-projective.

A $\k$-category $\calC$ with an action of a group $G$ have been well investigated
in connection with a so-called covering technique in representation theory of algebras
(see e.g., \cite{Gab}).
The orbit category $\calC/G$ and the canonical functor $\calC \to \calC/G$ 
are naturally constructed from these data, and one studied relationships
between $\Mod \calC$ and $\Mod \calC/G$.
We brought this point of view to the derived equivalence classification problem
of algebras in \cite{Asa97}, and a main tool obtained there
was fully used in the derived equivalence classifications
in \cite{Asa99, Asa02}.  The main tool was extended in \cite{Asa11} in
the following form:

\begin{thm}
Let $G$ be a group acting on categories $\calC$ and $\calC'$.
Assume that $\calC$ is $\k$-flat and that the following condition is satisfied:
\begin{itemize}
\item[$(*)$]
There exists a $G$-stable tilting subcategory $E$ of $\Kb(\prj \calC)$
such that there is a $G$-equivariant equivalence $\calC' \to E$.
\end{itemize}
Then the orbit categories $\calC/G$ and $\calC'/G$ are derived equivalent.
\end{thm}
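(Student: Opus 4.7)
The plan is to recognize this theorem as a direct special case of the main gluing theorem of the paper, obtained by encoding group actions as (colax) functors from a one-object category to $\kCat$. Throughout I would view the group $G$ as a small category with a single object $*$ whose endomorphism monoid is $G$; a $G$-action on a $\k$-category $\calC$ is then the same datum as a $\k$-functor $X \colon G \to \kCat$ with $X(*)=\calC$, and likewise the action on $\calC'$ is encoded by some $X' \colon G \to \kCat$ with $X'(*)=\calC'$. Under this dictionary the orbit category $\calC/G$ is identified with the Grothendieck construction $\Gr(X)$, and $\calC'/G = \Gr(X')$.

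Next I would translate the hypothesis $(*)$ into the assertion that $X$ and $X'$ are derived equivalent as colax functors in the sense of \cite{Asa-a}. The tilting subcategory $E \subseteq \Kb(\prj \calC)$ provides the objectwise derived equivalence (via Keller's theorem, for which the $\k$-flatness of $\calC$ is needed), $G$-stability of $E$ says that the induced $G$-action on $\Kb(\prj \calC)$ restricts to $E$, and the $G$-equivariant equivalence $\calC' \isoto E$ intertwines this action with the given action on $\calC'$. Together these are exactly the coherence data required to turn the pair $(\calC,\calC')$ with its comparison through $E$ into a derived equivalence of the colax functors $X$ and $X'$, with $G$ playing the role of the indexing category $I$.

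With this translation in hand, the conclusion is immediate from the main theorem of the paper: if $X$ and $X'$ are derived equivalent colax functors $I \to \kCat$, then $\Gr(X)$ and $\Gr(X')$ are derived equivalent. Applying this with $I=G$ yields a derived equivalence between $\Gr(X)=\calC/G$ and $\Gr(X')=\calC'/G$, as required.

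The main obstacle is the middle step: carefully checking that the somewhat ad hoc ``$G$-equivariance'' appearing in $(*)$ really matches the 2-categorical equivariance data (the pseudonatural transformation and modifications, or their colax analogues) that the definition of derived equivalence of colax functors demands. Once one sets up the correspondence $\{G\text{-equivariant tilting data}\} \leftrightarrow \{\text{derived equivalence of } X \text{ and } X' \text{ over } G\}$ and verifies the compatibility on morphisms of $G$, the theorem reduces to a single invocation of the gluing theorem, and no further argument is needed.
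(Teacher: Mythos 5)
Your proposal is correct and is exactly the route this paper intends: the stated theorem is the special case $I=G$ of Theorem~\ref{mainthm2}, with the hypothesis $(*)$ translating into the existence of a tilting colax functor $\calT$ for $X$ (the $G$-stable tilting subcategory $E$ gives the colax subfunctor of $\Kb(\prj X)$ with $I$-equivariant inclusion) together with an equivalence $\calT \simeq X'$ in $\Colax(G,\kCat)$ (the $G$-equivariant equivalence $\calC'\to E$, via Lemma~\ref{colax-eq}), and with $\calC/G=\Gr(X)$, $\calC'/G=\Gr(X')$. Note that the paper itself quotes this statement from \cite{Asa11} rather than reproving it, but your specialization argument is precisely how it follows from the results established here, and the ``main obstacle'' you flag is indeed the only point requiring verification.
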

(In the above, $\calC$ is called $\k$-{\em flat} if
all morphism spaces are flat $\k$-modules, and
$E$ is said to be $G$-{\em stable} if the set of objects in $E$ is stable
under the $G$-action on $\Kb(\prj \calC)$ induced from that on $\calC$.)
Observe that if we regard $G$ as a category with a single
object $*$, then a $G$-action on a category $\calC$ is nothing
but a functor $X : G \to \kCat$ with $X(*)=\calC$; and
the orbit category $\calC/G$ coincides with (the $\k$-linear version of) the Grothendieck
construction $\Gr(X)$ of $X$ defined in \cite{Groth}.

The purpose of this paper is to generalize this theorem to
an arbitrary category $I$ and to any {\em colax  functors}\footnote{
In \cite{Asa-a} we called them {\em oplax} functors.
There are two versions of Grothendieck constructions: (1) for contravariant lax functors
and (2) for covariant colax functors.  Since skew group algebras are formulated as the second
version we deal with colax functors here.  See \cite[Example 2.12] {Asa11}.
}
$X, X' \colon I \to \kCat$
(roughly speaking a colax functor $X$ is a family $(X(i))_{i\in I_0}$ of $\k$-categories indexed by
the objects $i$ of $I$ with an action of $I$,
the precise definition is given in Definition \ref{dfn:colax-fun}).
Recall that if $\calC$ is a category with an action of a group $G$, then
the module category $\Mod \calC$ (resp.\ the derived category $\calD(\Mod \calC)$)
has the induced $G$-action; thus both of them are again categories with $G$-actions.
Hence for a colax functor $X$ the ``module category'' $\Mod X$
(resp.\ the ``derived category'' $\calD(\Mod X)$) should again be a family of categories with
an $I$-action, i.e., a colax functor from $I$ to $\kAb$ (resp.\ to $\kTri$).
In addition, we need a notion of equivalences between colax functors for two purposes:
\begin{enumerate}
\item[(a)] to generalize the statement  $(*)$; and
\item[(b)] to define a derived equivalence of colax functors $X$, $X'$ by an existence of
an equivalence between colax functors  $\calD(\Mod X)$ and $\calD(\Mod X')$.
\end{enumerate}
To define equivalences of objects we need notions of 1-morphisms and 2-morphisms,
thus we need a 2-categorical structure on the collection of colax functors, i.e.,
it is needed to define a 2-category $\Colax(I, \bfC)$ of all colax functors
from $I$ to a 2-category $\bfC$, which can be used for both (a) and (b) above.
Having these things in mind we see that
to generalize the theorem above we have to solve the following problems:

(1) Define the ``module category'' of a colax functor again as
a colax functor.

(2) Define the ``derived category'' of a colax functor as
a colax functor.

(3) Give a natural definition of an equivalence between colax
functors using 2-morphisms of the 2-category of colax functors.

(4) Give a condition on a 1-morphism between colax functors
to be an equivalence. 

(5) Give a natural definition of a derived equivalence between
colax functors by the equivalence (defined in (3)) of
their ``derived categories'' defined in (2).

(6) Characterize the existence of derived equivalences of colax functors
by tilting subcategories, which turns out to be
a generalization of Rickard's Morita theorem for colax functors.

(7) Induce a derived equivalence of Grothendieck constructions
of colax functors from the existence of tilting subcategories,
which will be a generalization of the theorem above.

In our previous paper \cite{Asa-a}
we have solved the problems (1) -- (6) and made clear
the meaning of the condition $(*)$ in the setting of colax functors.
In this paper we solve the problem (7), and in addition we give
a unified way to solve (1) and (2) using the following general statement
on compositions with pseudofunctors (cf.\ Gordon--Power--Street \cite[Subsection 5.6]{GPS95}):

\begin{thm-nn}[Theorem \ref{comp-pseudofun}]
Let $\bfB, \bfC$ and $\bfD$ be $2$-categories and $V \colon \bfC \to \bfD$ a
pseudofunctor.
Then the obvious correspondence $($see subsection \ref{dfn-corr} for details$)$
$$
\Colax(\bfB, V) \colon \Colax(\bfB, \bfC) \to \Colax(\bfB, \bfD)
$$
turns out to be a pseudofunctor.
\end{thm-nn}

The solutions of (1) and (2) use the correspondence  on objects
given by the pseudofunctor $\Colax(\bfB, V)$.
The correspondence on 1-morphisms is needed also to solve (7).
The following is our main result (see Definition \ref{dfn:tilting-colax} for definitions):

\begin{thm-nn}[Theorem \ref{mainthm2}]
Let $X, X' \in \Colax(I, \kCat)$.
Assume that $X$ is $\k$-flat and that there exists a tilting colax functor $\calT$ for $X$
such that $\calT$ and $X'$ are equivalent in $\Colax(I, \kCat)$.
Then $\Gr(X)$ and $\Gr(X')$ are derived equivalent.
\end{thm-nn}

Note that there is an easier way (Lemma \ref{colax-eq}, a solution of (4))
to verify that $\calT$ and $X'$ are equivalent in $\Colax(I, \kCat)$ in the above.
As an easy application, the theorem above gives a unified proof of the
following.

\begin{thm-nn}[Theorem \ref{thm:unified-proof}]
Assume that  $\k$ is a field and that
$\k$-algebras $A$ and $A'$ are derived equivalent.
Then the following pairs are derived equivalent as well:
\begin{enumerate}
\item
path-categories $AQ$ and $A'Q$ for any quiver $Q$;
\item
incidence categories $AS$ and $A'S$ for any poset $S$; and
\item
monoid algebras $AG$ and $A'G$ for any monoid $G$.
\end{enumerate}
\end{thm-nn}
Theorem \ref{mainthm2} can be used to glue many
derived equivalences together as shown in Example \ref{exm:gluing}.

The paper is organized as follows.
In section 2 we recall the definition of the 2-category $\Colax(I, \bfC)$
of colax functors from a category $I$ to a 2-category $\bfC$.
In section 3 we first define a diagonal 2-functor
$\De \colon \kCat \to \Colax(I, \kCat)$ in an obvious way,
and introduce a notion of $I$-coverings $(F, \ps) \colon X \to \De(\calC)$
for a colax functor $X \in \Colax(I, \kCat)_0$ and $\calC \in \kCat_0$
(the subscript 0 stands for objects) as a generalization of
$G$-coverings for a group $G$.
In section 4 we define a $\k$-linear version of Grothendieck construction
as a 2-functor $\Gr \colon \Colax(I, \kCat) \to \kCat$ and introduce
the canonical morphism $(P, \ph) \colon X \to \De(\Gr(X))$.
In section 5 we will show that the Grothendieck construction is a
strict left adjoint to the diagonal 2-functor
with a unit given by the family of canonical morphisms, in particular,
this shows that  the canonical morphism $(P, \ph) \colon X \to \De(\Gr(X))$
is an $I$-covering and any other $I$-covering $X \to \De(\calC)$ is
given as the composite of this followed by $\De(H)$ for an equivalence
$H \colon \Gr(X) \to \calC$. This will be used in the proof of the main result.
In section 6 we redefine the module colax functor $\Mod X \colon I \to \kAb$
and its derived colax functor $\calD(\Mod X) \colon I \to \kTri$
for a colax functor $X \in \Colax(I, \kCat)_0$ by using Theorem \ref{comp-pseudofun}.
In addition, we also define $\Kb(\prj X)$ for $X \in \Colax(I, \kCat)_0$ and
show that this construction preserves $I$-precoverings, which is also used in the proof
of the main result.
It is obvious that the definitions given here coincide
with those given in our previous paper \cite{Asa-a}.
In section 7 we recall the definition of derived equivalences of colax functors in $\Colax(I, \kCat)$
and the theorem characterizing the derived equivalence by tilting colax functors
(Theorem \ref{mainthm1}).
In section 8 we give a proof of Theorem \ref{mainthm2},
and give some applications including an example
of gluing of pieces of derived equivalences together to have a larger one.
In the last section we give a proof of Theorem \ref{comp-pseudofun}.

\section*{Acknowledgements}
Most part of this work was done during my stay in Bielefeld
in February and September, 2010; and a final part (Theorem 6.4)
in September, 2011.
I would like to thank Claus M.\ Ringel and Henning Krause
for their hospitality and nice discussions.
The results were announced at the seminars in the Universities
of Bielefeld, Bonn, Paris 7, and in Beijing Normal University.
I would like to thank Jan Schr\"oer, Bernhard Keller and Changchang Xi
for their kind invitations.
The results were also announced at conferences:
ICRA XIV held in August 2010 in Tokyo (functor version),
the 6-th China-Japan-Korea International Conference on Ring and Module Theory
held in June 2011 at Kyung Hee University at Suwon,
and Shanghai International Conference on Representation
Theory of Algebras held in October 2011 at Shanghai Jiao Tong University.
I would like to thank the organizers for their kind invitations and hospitality.
Finally, I would like to thank D.\ Tamaki for useful discussions with him
on Grothendieck constructions and for his expositions on 2-categorical notions
through his preprints \cite{Tam, Tam2} that aimed at a generalization of \cite{Asa12}.
In addition I would also like to thank the referee for his/her careful reading,
suggestions and questions, by which the paper became easier to read
and I could notice that I forgot to consider the naturality property (0)
of 1-morphisms in Definition \ref{dfn:colax-fun-2cat}
and I could add the verification of this property
in the proof of Lemma \ref{lem:1-morphisms-colax};
also I changed the terminology ``oplax'' to ``colax''.

\section{Preliminaries}

In this section we recall the definition of the 2-category of colax functors
from $I$ to a 2-category from \cite{Asa-a} (see also Tamaki \cite{Tam}).

\begin{dfn}
\label{dfn:colax-fun}
Let $\bfC$ be a 2-category.
A {\em colax functor}
(or an {\em oplax} functor)
from $I$ to $\bfC$ is a triple
$(X, \et, \th)$ of data:
\begin{itemize}
\item
a quiver morphism $X\colon I \to \bfC$, where $I$ and $\bfC$ are regarded as quivers
by forgetting additional data such as 2-morphisms or compositions;
\item
a family $\et:=(\et_i)_{i\in I_0}$ of 2-morphisms $\et_i\colon X(\id_i) \Rightarrow \id_{X(i)}$ in $\bfC$
indexed by $i\in I_0$; and
\item
a family $\th:=(\th_{b,a})_{(b,a)}$ of 2-morphisms
$\th_{b,a} \colon X(ba) \Rightarrow X(b)X(a)$
in $\bfC$ indexed by $(b,a) \in \com(I):=
\{(b,a)\in I_1 \times I_1 \mid ba \text{ is defined}\}$
\end{itemize}
satisfying the axioms:

\begin{enumerate}
\item[(a)]
For each $a\colon i \to j$ in $I$ the following are commutative:
$$
\vcenter{
\xymatrix{
X(a\id_i) \ar@{=>}[r]^(.43){\th_{a,\id_i}} \ar@{=}[rd]& X(a)X(\id_i)
\ar@{=>}[d]^{X(a)\et_i}\\
& X(a)\id_{X(i)}
}}
\qquad\text{and}\qquad
\vcenter{
\xymatrix{
X(\id_j a) \ar@{=>}[r]^(.43){\th_{\id_j,a}} \ar@{=}[rd]& X(\id_j)X(a)
\ar@{=>}[d]^{\et_jX(a)}\\
& \id_{X(j)}X(a)
}}\quad;\text{ and}
$$
\item[(b)]
For each $i \ya{a}j \ya{b} k \ya{c} l$ in $I$ the following is commutative: 
$$
\xymatrix@C=3em{
X(cba) \ar@{=>}[r]^(.43){\th_{c,ba}} \ar@{=>}[d]_{\th_{cb,a}}& X(c)X(ba)
\ar@{=>}[d]^{X(c)\th_{b,a}}\\
X(cb)X(a) \ar@{=>}[r]_(.45){\th_{c,b}X(a)}& X(c)X(b)X(a).
}
$$
\end{enumerate}
\end{dfn}

\begin{dfn}
Let $\bfC$ be a 2-category and $X = (X, \et, \th)$, $X'= (X', \et', \th')$
be colax functors from $I$ to $\bfC$.
A {\em $1$-morphism} (called a {\em left transformation}) from $X$ to $X'$
is a pair $(F, \ps)$ of data
\begin{itemize}
\item
a family $F:=(F(i))_{i\in I_0}$ of 1-morphisms $F(i)\colon X(i) \to X'(i)$
in $\bfC$
; and
\item
a family $\ps:=(\ps(a))_{a\in I_1}$ of 2-morphisms
$\ps(a)\colon X'(a)F(i) \Rightarrow F(j)X(a)$
$$
\xymatrix{
X(i) & X'(i)\\
X(j) & X'(j)
\ar_{X(a)} "1,1"; "2,1"
\ar^{X'(a)} "1,2"; "2,2"
\ar^{F(i)} "1,1"; "1,2"
\ar_{F(j)} "2,1"; "2,2"
\ar@{=>}_{\ps(a)} "1,2"; "2,1"
}
$$
in $\bfC$ indexed by $a\colon i \to j$ in $I_1$
\end{itemize}
satisfying the axioms
\begin{enumerate}
\item[(a)]
For each $i \in I_0$ the following is commutative:
$$
\vcenter{
\xymatrix{
X'(\id_i)F(i) & F(i)X(\id_i)\\
\id_{X'(i)}F(i) & F(i)\id_{X(i)}
\ar@{=>}^{\ps(\id_i)} "1,1"; "1,2"
\ar@{=} "2,1"; "2,2"
\ar@{=>}_{\et'_iF(i)} "1,1"; "2,1"
\ar@{=>}^{F(i)\et_i} "1,2"; "2,2"
}}\quad;\text{ and}
$$
\item[(b)]
For each $i \ya{a} j \ya{b} k$ in $I$ the following is commutative:
$$
\xymatrix@C=4pc{
X'(ba)F(i) & X'(b)X'(a)F(i) & X'(b)F(j)X(a)\\
F(k)X(ba) & & F(k)X(b)X(a).
\ar@{=>}^{\th'_{b,a}F(i)} "1,1"; "1,2"
\ar@{=>}^{X'(b)\ps(a)} "1,2"; "1,3"
\ar@{=>}_{F(k)\,\th_{b,a}} "2,1"; "2,3"
\ar@{=>}_{\ps(ba)} "1,1"; "2,1"
\ar@{=>}^{\ps(b)X(a)} "1,3"; "2,3"
}
$$
\end{enumerate}
A $1$-morphism $(F, \ps) \colon X \to X'$ is said to be
$I$-{\em equivariant} if $\ps(a)$ is a 2-isomorphism in $\bfC$
for all $a \in I_1$.
\end{dfn}

\begin{dfn}
Let $\bfC$ be a 2-category, $X = (X, \et, \th)$, $X'= (X', \et', \th')$
be colax functors from $I$ to $\bfC$, and
$(F, \ps)$, $(F', \ps')$ 1-morphisms from $X$ to $X'$.
A {\em $2$-morphism} from $(F, \ps)$ to $(F', \ps')$ is a
family $\ze= (\ze(i))_{i\in I_0}$ of 2-morphisms
$\ze(i)\colon F(i) \Rightarrow F'(i)$ in $\bfC$
indexed by $i \in I_0$
such that the following is commutative for all $a\colon i \to j$ in $I$:
$$
\xymatrix@C=4pc{
X'(a)F(i) & X'(a)F'(i)\\
F(j)X(a) & F'(j)X(a).
\ar@{=>}^{X'(a)\ze(i)} "1,1"; "1,2"
\ar@{=>}^{\ze(j)X(a)} "2,1"; "2,2"
\ar@{=>}_{\ps(a)} "1,1"; "2,1"
\ar@{=>}^{\ps'(a)} "1,2"; "2,2"
}
$$
\end{dfn}

\begin{dfn}
Let $\bfC$ be a 2-category, $X = (X, \et, \th)$, $X'= (X', \et', \th')$
and $X''= (X'', \et'', \th'')$
be colax functors from $I$ to $\bfC$, and
let $(F, \ps)\colon X \to X'$, $(F', \ps')\colon X' \to X''$
be 1-morphisms.
Then the composite $(F', \ps')(F, \ps)$ of $(F, \ps)$ and
$(F', \ps')$ is a 1-morphism from $X$ to $X''$ defined by
$$
(F', \ps')(F, \ps):= (F'F, \ps'\circ\ps),
$$
where $F'F:=((F'(i)F(i))_{i\in I_0}$ and for each $a\colon i \to j$ in $I$,
$
(\ps'\circ\ps)(a):= F'(j)\ps(a)\circ \ps'(a)F(i)
$
is the pasting of the diagram
$$
\xymatrix@C=4pc{
X(i) & X'(i) & X''(i)\\
X(j) & X'(j) & X''(j).
\ar_{X(a)} "1,1"; "2,1"
\ar_{X'(a)} "1,2"; "2,2"
\ar^{F(i)} "1,1"; "1,2"
\ar_{F(j)} "2,1"; "2,2"
\ar@{=>}_{\ps(a)} "1,2"; "2,1"
\ar^{X''(a)} "1,3"; "2,3"
\ar^{F'(i)} "1,2"; "1,3"
\ar_{F'(j)} "2,2"; "2,3"
\ar@{=>}_{\ps'(a)} "1,3"; "2,2"
}
$$
\end{dfn}

The following is straightforward to verify.

\begin{prp}
Let $\bfC$ be a $2$-category.
Then colax functors $I \to \bfC$,
$1$-morphisms between them, and $2$-morphisms between
$1$-morphisms $($defined above$)$ define a $2$-category,
which we denote by $\Colax(I, \bfC)$.
\end{prp}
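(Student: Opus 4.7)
The plan is to unpack the defining data of a 2-category and verify each piece, using the 2-categorical structure of $\bfC$ (associativity, unit laws, and the interchange law for horizontal and vertical composition of 2-morphisms).

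First, I would verify that composition of 1-morphisms is well-defined, i.e., that the pair $(F'F, \ps'\circ\ps)$ genuinely satisfies axioms (a) and (b) of a 1-morphism. Axiom (a) for the composite follows from pasting the two hexagons for $(F,\ps)$ and $(F',\ps')$ along the middle column $X'(\id_i)F(i)\To F(i)X(\id_i)$, then using the unit axiom $\et_i$, $\et'_i$ in $\bfC$. Axiom (b) for the composite is the main calculation: one stacks the two pentagons for $\ps$ and $\ps'$ and invokes the naturality (interchange) of 2-morphisms in $\bfC$ to rearrange the pasting. Next, define the identity 1-morphism $\id_X:=(\id_{X(i)}, \id_{X(a)})_{i,a}$ and check that the unit axioms of $\bfC$ force both (a) and (b) to hold trivially; the unit laws $\id_X\circ (F,\ps)=(F,\ps)=(F,\ps)\circ \id_X$ then follow from the unit laws in $\bfC$. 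Associativity of composition of 1-morphisms reduces, componentwise, to associativity of composition of 1-morphisms in $\bfC$ and to a diagram-chase for the $\ps$-components that again uses the interchange law.

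Second, I would set up the hom-categories. Fixing $X,X'\in\Colax(I,\bfC)_0$, the 2-morphisms between 1-morphisms $X\to X'$ are families $\ze=(\ze(i))_{i\in I_0}$ satisfying the square compatibility with $\ps,\ps'$. Vertical composition is defined pointwise by $(\ze'\ze)(i):=\ze'(i)\ze(i)$ in $\bfC$; that this again satisfies the compatibility square is immediate by pasting the two squares for $\ze$ and $\ze'$ side by side. Pointwise identities $\id_{(F,\ps)}(i):=\id_{F(i)}$ give identity 2-morphisms, and associativity and unit laws for vertical composition are inherited componentwise from $\bfC$.

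Third, I would define horizontal composition of 2-morphisms. Given $\ze\colon(F,\ps)\To(F',\ps')$ with $X\to X'$ and $\ze'\colon(G,\vph)\To(G',\vph')$ with $X'\to X''$, put $(\ze'*\ze)(i):=\ze'(i)*\ze(i)$, the horizontal composite in $\bfC$. The nontrivial check is that this is a 2-morphism from $(G,\vph)(F,\ps)$ to $(G',\vph')(F',\ps')$, i.e.\ that the compatibility square with respect to the composite $(\vph\circ\ps)$ and $(\vph'\circ\ps')$ commutes for every $a\in I_1$. This is the main obstacle, and it reduces to a pasting argument: one decomposes the square into two subsquares, one containing $X'(a)F(i)\To F(j)X(a)$ whiskered by $\ze'$ on the outside and one containing $X''(a)G(i)\To G(j)X'(a)$ whiskered by $\ze$ on the outside, and then applies the interchange law in $\bfC$ together with the individual compatibility squares for $\ze$ and $\ze'$.

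Finally, I would verify the remaining 2-category axioms: associativity of horizontal composition and the interchange law $(\ze'_2\ze'_1)*(\ze_2\ze_1)=(\ze'_2*\ze_2)(\ze'_1*\ze_1)$. Both reduce, componentwise at each $i\in I_0$, to the corresponding axioms in $\bfC$; the colax data $\eta,\theta,\psi$ play no role in these final checks since everything is componentwise at objects. The hard part throughout is purely bookkeeping of pasting diagrams — there are no conceptual obstacles — which is why the author declares the statement straightforward. I would carry out the verification of axiom (b) for the composite of 1-morphisms once in detail (as the representative computation) and remark that all other axioms follow by analogous diagram-chases using associativity, unit laws, and interchange in $\bfC$.
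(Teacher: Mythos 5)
Your proposal is correct and is precisely the routine verification that the paper leaves to the reader (the paper offers no proof beyond declaring the statement straightforward): define identities and compositions componentwise, and reduce every 2-category axiom to associativity, unit laws, and the interchange law in $\bfC$ via pasting. The one genuinely non-trivial point you rightly single out — that the composite $(F'F,\ps'\circ\ps)$ satisfies axioms (a) and (b), and that the horizontal composite of 2-morphisms is compatible with the pasted $\ps$-data — is exactly where the content lies, and your pasting arguments for these are sound.
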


\begin{ntn}\label{ntn:co-op}
Let $\bfC$ be a 2-category.
Then we denote by $\bfC^{\text{op}}$
(resp.\ $\bfC^{\text{co}}$) the 2-category
obtained from $\bfC$ by reversing the 1-morphisms
(resp.\ the 2-morphisms), and we set
$\bfC^{\text{coop}}:=(\bfC^{\text{co}})^{\text{op}}=(\bfC^{\text{op}})^{\text{co}}$.
\end{ntn}

\section{$I$-coverings}

In this section we introduce the notion of $I$-coverings that is a generalization of 
that of $G$-coverings for a group $G$ introduced in \cite{Asa11}, which was
obtained by generalizing the notion of Galois coverings
introduced by Gabriel in \cite{Gab}.
This will be used in the proof of our main theorem.
 
\begin{dfn}
We define a 2-functor $\De\colon \kCat \to \Colax(I, \kCat)$ as follows,
which is called the {\em diagonal} 2-functor:
\begin{itemize}
\item
Let $\calC \in \kCat$. Then $\De(\calC)$ is defined to be a functor
sending each morphism $a\colon i \to j$ in $I$ to
$\id_{\calC}\colon \calC \to \calC$.
\item
Let $E \colon \calC \to \calC'$ be a 1-morphism in $\kCat$.
Then $\De(E)\colon \De(\calC) \to \De(\calC')$ is a 1-morphism
$(F,\ps)$ in $\Colax(I, \kCat)$
defined by $F(i):=E$ and $\ps(a):= \id_E$ for all $i \in I_0$ and all $a \in I_1$:
$$
\xymatrix{
\calC & \calC'\\
\calC & \calC'.
\ar^E "1,1"; "1,2"
\ar^E "2,1"; "2,2"
\ar_{\id_{\calC}} "1,1"; "2,1"
\ar^{\id_{\calC'}} "1,2"; "2,2"
\ar@{=>}_{\id_E}"1,2";"2,1"
}
$$
\item
Let $E, E'\colon \calC \to \calC'$ be 1-morphisms in $\kCat$, and
$\al \colon  E \To E'$ a 2-morphism in $\kCat$.
Then $\De(\al)\colon \De(E) \To \De(E')$ is a 2-morphism in
$\Colax(I, \kCat)$ defined by $\De(\al):= (\al)_{i\in I_0}$.
\end{itemize}
\end{dfn}

\begin{rmk}\label{rmk:1-mor-to-De}
Let $\bfC$ be a 2-category,
$X=(X, \et, \th) \in \Colax(I, \bfC)$, and $C \in \bfC_0$.
Further let
\begin{itemize}
\item $F$ be a family of  1-morphisms $F(i)\colon X(i) \to C$ in $\bfC$
indexed by $i\in I_0$; and
\item $\ps$ be a family 2-morphisms $\ps(a)\colon F(i) \Rightarrow F(j)X(a)$
indexed by $a\colon i \to j$ in $I$:
$$
\xymatrix{
X(i) & C\\
X(j) & C
\ar^{F(i)} "1,1"; "1,2"
\ar_{F(j)} "2,1"; "2,2"
\ar_{X(a)} "1,1";"2,1"
\ar@{=} "1,2";"2,2"
\ar@{=>} "1,2";"2,1"
}
$$
\end{itemize}
Then $(F, \ps)$ is in $\Colax(I, \bfC)(X, \De(C))$
if and only if the following hold.
\begin{enumerate}
\item[(a)]
For each $i \in I_0$ the following is commutative:
$$
\vcenter{
\xymatrix{
F(i) & F(i)X(\id_i)\\
 & F(i)\id_{X(i)}
\ar@{=>}^{\ps(\id_i)} "1,1"; "1,2"
\ar@{=} "1,1"; "2,2"
\ar@{=>}^{F(i)\et_i} "1,2"; "2,2"
}}\quad;\text{ and}
$$
\item[(b)]
For each $i \ya{a} j \ya{b} k$ in $I$ the following is commutative:
$$
\xymatrix@C=4pc{
F(i) &  F(j)X(a)\\
F(k)X(ba) &  F(k)X(b)X(a).
\ar@{=>}^{\ps(a)} "1,1"; "1,2"
\ar@{=>}_{F(k)\,\th_{b,a}} "2,1"; "2,2"
\ar@{=>}_{\ps(ba)} "1,1"; "2,1"
\ar@{=>}^{\ps(b)X(a)} "1,2"; "2,2"
}
$$
\end{enumerate}
\end{rmk}

\begin{dfn}
Let $\calC \in \kCat$ and $(F, \ps) \colon X \to \De(\calC) $ be in $\Colax(I, \kCat)$.
Then 
\begin{enumerate}
\item
$(F, \ps)$ is called an $I$-{\em precovering} (of $\calC$) if the homomorphism
$$
(F,\ps)_{x,y}^{(1)}\colon \Ds_{a\in I(i,j)}X(j)(X(a)x, y) \to \calC(F(i)x, F(j)y)
$$
of $\k$-modules defined by
$(f_a\colon X(a)x \to y)_{a\in I(i,j)} \mapsto \sum_{a\in I(i,j)} F(j)(f_a) \circ\ps(a)(x)$
is an isomorphism for all $i, j \in I_0$ and
all $x \in X(i)_0$, $y \in X(j)_0$.
\item
$(F, \ps)$ is called an $I$-{\em covering} if it is an $I$-precovering and is {\em dense},
i.e., for each $c \in \calC_0$ there exists an $i \in I_0$ and $x \in X(i)_0$
such that $F(i)(x)$ is isomorphic to $c$ in $\calC$.
\end{enumerate}
\end{dfn}

\section{Grothendieck constructions}
In this section we define a 2-functor $\Gr\colon \Colax(I, \kCat) \to \kCat$ whose
correspondence on objects is a $\k$-linear version of (the opposite version of)
the original Grothendieck construction (cf. \cite{Tam}).

\begin{dfn}
We define a 2-functor $\Gr\colon \Colax(I, \kCat) \to \kCat$,
which is called the {\em Grothendieck construction}.

{\bf On objects.}  Let $X=(X, \et, \th) \in \Colax(I, \kCat)_0$.
Then $\Gr(X) \in \kCat_0$ is defined as follows.
\begin{itemize}
\item  $\Gr(X)_0:= \bigcup_{i\in I_0} \{ i \} \times X(i)_0
= \{{}_ix:= (i,x) \mid i \in I_0, x \in X(i)_0\}$.
\item  For each ${}_ix, {}_jy \in \Gr(X)_0$, we set
$$
\Gr(X)({}_ix, {}_jy) := \bigoplus_{a\in I(i,j)} X(j)(X(a)x, y).
$$
\item  For each ${}_ix, {}_jy, {}_kz \in \Gr(X)_0$ and
each $f=(f_a)_{a\in I(i,j)}\in \Gr(X)({}_ix, {}_jy)$,
$g=(g_b)_{b\in I(j,k)}\in \Gr(X)({}_jy, {}_kz)$, we set
$$
g\circ f:= \left(\sum_{\begin{smallmatrix}a\, \in\, I(i,j)\\b\, \in\, I(j,k)\\c\, =\, ba\end{smallmatrix}}
g_b\circ X(b)f_a
\circ \th_{b,a}x \right)_{c\,\in\, I(i,k)},
$$
where each summand is the composite of
$$
X(ba)x \ya{\th_{b,a}x} X(b)X(a)x \ya{X(b)f_a}
X(b)y \ya{g_b} z.$$
\item For each ${}_ix \in \Gr(X)_0$ the identity $\id_{{}_ix}$ is given by
$$
\id_{{}_ix} = (\de_{a,\id_i}\et_i\,x)_{a\in I(i,i)} \in \Ds_{a\in I(i,i)}X(i)(X(a)x,x),
$$
where $\de$ is the Kronecker delta\footnote{
This is used to mean that the $a$-th component is $\et_i\,x$ if $a=\id_i$,
and 0 otherwise. 
}.
\end{itemize}

{\bf On 1-morphisms.}
Let $X=(X, \et, \th), X'=(X', \et', \th')$ be objects of $\Colax(I, \kCat)$ and
$(F, \ps)\colon X \to X'$ a 1-morphism in $\Colax(I, \kCat)$.
Then a 1-morphism
$$
\Gr(F, \ps) \colon \Gr(X) \to \Gr(X')
$$
in $\kCat$ is defined as follows.
\begin{itemize}
\item For each ${}_ix \in \Gr(X)_0$, $\Gr(F, \ps)({}_ix):={}_i(F(i)x)$.
\item For each ${}_ix, {}_jy \in \Gr(X)_0$ and
each $f=(f_a)_{a\in I(i,j)} \in \Gr(X)({}_ix, {}_jy)$,
we set
$\Gr(F,\ps)(f):= (F(j)f_a\circ \ps(a)x)_{a\in I(i,j)}$, where each entry is the composite of
$$
X'(a)F(i)x \xrightarrow{\ps(a)x} F(j)X(a)x \xrightarrow{F(j)f_a} F(j)y.
$$
\end{itemize}

{\bf On 2-morphisms.}
Let $X=(X, \et, \th), X'=(X', \et', \th')$ be objects of $\Colax(I, \kCat)$ and
$(F, \ps), (F', \ps') \colon X \to X'$ 1-morphisms in $\Colax(I, \kCat)$,
and let $\ze\colon (F,\ps) \Rightarrow (F', \ps')$ be a 2-morphism in $\Colax(I, \kCat)$.
Then a 2-morphism
$$
\Gr(\ze) \colon \Gr(F,\ps) \Rightarrow \Gr(F', \ps')
$$
in $\kCat$ is defined by
$$
\Gr(\ze){}_ix := (\de_{a, \id_i} \ze(i)x)_{a\in I(i,i)}\colon {}_i(F(i)x)
 \to {}_i(F'(i)x)$$
in $\Gr(X')$ for each ${}_ix \in \Gr(X)_0$.
\end{dfn}

\begin{exm}\label{exm:Gr}
Let $A$ be a $\k$-algebra
regarded as a $\k$-category with a single object.
Then $A \in \kCat_0$.
Consider the functor $X:= \De(A) \colon I \to \kCat$.
Then it is straightforward to verify the following.
\begin{enumerate}
\item If $I$ is a free category defined by the quiver $1 \to 2$,
then $\Gr(X)$ is isomorphic to the triangular algebra
$\bmat{A&0\\A&A}$.
\item If $I$ is a free category defined by a quiver $Q$,
then $\Gr(X)$ is isomorphic to the path-category $AQ$ of $Q$ over $A$.
\item If $I$ is a poset $S$,
then $\Gr(X)$ is isomorphic to the incidence category $AS$ of $S$ over $A$.
\item If $I$ is a monoid $G$,
then $\Gr(X)$ is isomorphic to the monoid algebra\footnote{
Since $AG$ has the identity $1_A1_G$,
this is regarded as a category with a single object.
} $AG$ of $G$ over $A$.
\end{enumerate}
In (3) above, $AS$ is defined to be the factor category
of the path-category $AQ$ modulo the ideal
generated by the full commutativity relations in $Q$,
where $Q$ is the Hasse diagram of $S$ regarded as a quiver by
drawing an arrow $x \to y$ if $x \le y$ in $Q$.
If $S$ is a finite poset, then $AS$ is identified with the usual incidence algebra.

See \cite{Asa-Kim} for further examples of the Grothendieck constructions
of functors, in which the examples (2) and (3) above are unified and generalized.
\end{exm}

\begin{dfn}
Let $X \in \Colax(I, \kCat)$.
We define a left transformation $(P_X, \ph_X):= (P, \ph)\colon X \to \De(\Gr(X))$
(called the {\em canonical morphism}) as follows.
\begin{itemize}
\item For each $i \in I_0$, the functor $P(i)\colon X(i) \to \Gr(X)$ is  defined by
$$
\left\{
\begin{aligned}
P(i)x&:= {}_ix\\
P(i)f &:=(\de_{a,\id_i} f\circ (\et_i\,x))_{a\in I(i,i)}\colon {}_ix \to {}_iy\text{\   in\ $\Gr(X)$}
\end{aligned}
\right.
$$
for all $f\colon x \to y$ in $X(i)$.
\item For each $a \colon i \to j$ in $I$, the natural transformation
$\ph(a)\colon P(i) \Rightarrow P(j)X(a)$
$$\xymatrix{
X(i) & \Gr(X)\\
X(j) & \Gr(X)
\ar^{P(i)} "1,1";"1,2"
\ar_{P(j)} "2,1";"2,2"
\ar_{X(a)} "1,1";"2,1"
\ar@{=} "1,2";"2,2"
\ar@{=>}_{\ph(a)}"1,2";"2,1"
}
$$
is defined by $\ph(a)x:= (\de_{b,a} \id_{X(a)x})_{b \in I(i,j)}$ for all $x \in X(i)_0$.
\end{itemize}
\end{dfn}

\begin{lem}
The $(P, \ph)$ defined above is a $1$-morphism in $\Colax(I, \kCat)$.
\end{lem}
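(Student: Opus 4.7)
The plan is to apply Remark~\ref{rmk:1-mor-to-De}: since the codomain is $\De(\Gr(X))$, the data of a $1$-morphism reduces to the data of $(P, \ph)$, and the axioms reduce to the two coherence diagrams (a) and (b) of that remark, together with the standing requirements that each $P(i)$ is a $\k$-linear functor and each $\ph(a)$ is a natural transformation. Throughout I would exploit the fact that a morphism in $\Gr(X)$ is a family indexed by $I(i,j)$ and that composition in $\Gr(X)$ is controlled by the structural $2$-morphisms $\th_{b,a}$ and $\et_i$ of $X$.

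For functoriality of $P(i)$, the identity $P(i)(\id_x)=\id_{{}_ix}$ is immediate, while $P(i)(g\circ f)=P(i)(g)\circ P(i)(f)$ follows by expanding the right-hand side in $\Gr(X)$: only the $\id_i$-component survives, and it collapses to $(g\circ f)\circ\et_i\,x$ via the identity axiom $X(\id_i)\et_i\circ\th_{\id_i,\id_i}=\id_{X(\id_i)}$ and the naturality of $\et_i\colon X(\id_i)\To\id_{X(i)}$. For the naturality of $\ph(a)$ at $g\colon x\to x'$, the two composites to be compared are supported only at the component $a\in I(i,j)$, and the identity axioms $X(a)\et_i\circ\th_{a,\id_i}=\id_{X(a)}$ and $\et_jX(a)\circ\th_{\id_j,a}=\id_{X(a)}$ force both sides to equal $X(a)g$.

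For coherence axiom (a), I would expand $(P(i)\et_i)\circ\ph(\id_i)$ at $x$: it is supported only at $\id_i\in I(i,i)$, and its value there simplifies to $\et_i x\circ(\et_iX(\id_i)x\circ\th_{\id_i,\id_i}x)=\et_i x$ by the second identity axiom of $X$, which agrees with $\id_{{}_ix}$ in the same component. For axiom (b), both composites are supported only at $ba\in I(i,k)$, and each reduces to $\th_{b,a}x$ after applying the identity axioms; notably, the associativity axiom for $\th$ is not required at this step. The main obstacle is simply bookkeeping: each composition in $\Gr(X)$ stacks together a Kronecker $\de$, an application of some $X(b)$, and a component of $\th$ or $\et$, but the saving observation is that $\ph(a)x$ and $P(i)f$ are each concentrated in a single component, so the relevant direct sums collapse at once and the remaining identities are immediate instances of the identity axioms of the colax functor $X$.
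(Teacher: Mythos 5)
Your proposal is correct and takes exactly the route the paper intends: the paper's proof is just the one line ``straightforward by using Remark~\ref{rmk:1-mor-to-De}'', and your verification of functoriality of the $P(i)$, naturality of the $\ph(a)$, and the two coherence diagrams (a), (b) of that remark — each collapsing to a single $\de$-supported component and then to an instance of the identity axioms for $\et$ and $\th$ — is precisely the omitted computation. Your observation that only the unit axioms of the colax functor $X$ are needed (not coassociativity of $\th$) is also accurate.
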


\begin{proof}
This is straightforward by using Remark \ref{rmk:1-mor-to-De}.
\end{proof}

\begin{prp}\label{prp:can-covering}
Let $X \in \Colax(I, \kCat)$.
Then the canonical morphism $(P, \ph)\colon X \to \De(\Gr(X))$
is an $I$-covering.
More precisely, the morphism
$$(P, \ph)_{x,y}^{(1)}\colon \Ds_{a\in I(i,j)}X(j)(X(a)x, y) \to \Gr(X)(P(i)x, P(j)y)$$
is the identity
for all $i, j \in I_0$ and all $x \in X(i)_0$, $y \in X(j)_0$.

\end{prp}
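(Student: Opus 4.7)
The plan is to compute the map $(P,\ph)^{(1)}_{x,y}$ componentwise from the definitions and verify that it literally equals the identity of $\bigoplus_{a\in I(i,j)} X(j)(X(a)x,y)$; once this is done the precovering condition is immediate, and density is free since every object of $\Gr(X)$ is by definition of the form ${}_ix = P(i)(x)$, with no need to pass to isomorphism. So I reduce everything to the calculation of $P(j)(f_a) \circ \ph(a)(x)$ in $\Gr(X)({}_ix,{}_jy)$ for a single $a \in I(i,j)$ and a single $f_a \colon X(a)x \to y$, since the full map is just the sum over $a$.

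Next I would plug in the two factors: by the definition of $P(j)$ on morphisms, the $b$-component of $P(j)(f_a)$ is $\de_{b,\id_j}\bigl(f_a \circ \et_j(X(a)x)\bigr)$ for $b \in I(j,j)$, while the $a'$-component of $\ph(a)(x)$ is $\de_{a',a}\,\id_{X(a)x}$ for $a' \in I(i,j)$. Inserting these into the composition formula of $\Gr(X)$, the $c$-component of $P(j)(f_a)\circ \ph(a)(x)$ is the sum
$$
\sum_{\substack{b \in I(j,j),\ a' \in I(i,j)\\ ba' = c}} (P(j)(f_a))_b \circ X(b)(\ph(a)(x))_{a'} \circ \th_{b,a'}\,x,
$$
which the two Kronecker deltas collapse to the single term $(b,a') = (\id_j, a)$, non-zero only when $c = a$, giving the expression $f_a \circ \et_j(X(a)x) \circ \th_{\id_j,a}\,x$.

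The crucial step is to recognize that axiom (a) of Definition \ref{dfn:colax-fun} for the composable pair $(\id_j, a)$ says exactly that $(\et_j X(a)) \circ \th_{\id_j,a} = \id_{X(a)}$ as 2-morphisms $X(a) \To X(a)$; evaluating at $x$ reduces the above expression to $f_a$. Hence $(P(j)(f_a) \circ \ph(a)(x))_c = \de_{c,a}\,f_a$, and summing over $a$ recovers $(f_c)_c$, so $(P,\ph)^{(1)}_{x,y}$ is the identity and in particular an isomorphism. The main obstacle is purely bookkeeping: keeping track of which index ranges over $I(j,j)$ and which over $I(i,j)$, and invoking the correct half of the unit axiom (the $\id_j a$ side rather than the $a \id_i$ side). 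No deeper use of the colax structure or of $\th_{b,a}$ for non-identity $b$ is needed.
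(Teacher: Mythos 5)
Your proposal is correct and follows essentially the same route as the paper's proof: density is read off from the definition of $\Gr(X)_0$, the map $(P,\ph)^{(1)}_{x,y}$ is computed componentwise, the Kronecker deltas collapse the composition formula to the single term $f_a \circ \et_j(X(a)x) \circ \th_{\id_j,a}x$, and the $\id_j a$ half of the unit axiom (a) for colax functors reduces this to $f_a$. Nothing is missing.
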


\begin{proof}
By the definitions of $\Gr(X)_0$ and of $P$ it is obvious
that $(P, \ph)$ is dense.
Let $i, j \in I_0$ and $x \in X(i)$, $y \in X(j)$.
We only have to show that
$$(P, \ph)_{x,y}^{(1)}\colon \Ds_{a\in I(i,j)}X(j)(X(a)x, y) \to \Gr(X)(P(i)x, P(j)y)$$
is the identity.
Let $f = (f_a)_{a\in I(i,j)}\in \Ds_{a\in I(i,j)}X(j)(X(a)x, y)$.
Then
$$
\begin{aligned}
(P, \ph)_{x,y}^{(1)}(f) &= \sum_{a\in I(i,j)}P(j)(f_a)\circ \ph(a)x\\
&= \sum_{a\in I(i,j)}(\de_{b,\id_j}f_a\circ (\et_j\,x))_{b\in I(j,j)}\circ
(\de_{c,a}\id_{X(a)x})_{c\in I(i,j)}\\
&= \sum_{a\in I(i,j)}\left(\sum_{\smat{b\in I(j,j)\\c\in I(i,j)\\d=bc}}\de_{b,\id_j}f_a \circ(\et_j\,x)\circ \de_{c,a}\id_{X(b)X(a)x}\circ \th_{b,c}x \right)_{d\in I(i,j)}\\
&= \sum_{a\in I(i,j)}\left(\de_{d,a}f_a \circ (\et_j\,x)\circ \id_{X(\id_j)X(a)x}\circ \th_{\id_j, a}x \right)_{d\in I(i,j)}\\
&= (f_a \circ (\et_j\,x) \circ \th_{\id_j,a}x)_{a\in I(i,j)}
= (f_a)_{a\in I(i,j)}\\
&= f,
\end{aligned}
$$
as required.
\end{proof}

\begin{lem}\label{covering-equivalence}
Let $X \in \Colax(I, \kCat)_0$ and
$H\colon \Gr(X) \to \calC$ be in $\kCat$ and consider the composite $1$-morphism
$(F, \ps) \colon X \ya{(P, \ph)} \De(\Gr(X)) \ya{\De(H)} \De(\calC)$.
Then $(F, \ps)$ is an $I$-covering if and only if $H$ is an equivalence.
\end{lem}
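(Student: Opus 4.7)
The plan is to directly translate the $I$-covering conditions for $(F, \ps) = \De(H) \circ (P, \ph)$ into the conditions ``fully faithful'' and ``essentially surjective'' for $H$, using \prpref{prp:can-covering} as the bridge.

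First I would unwind the composite. Since $\De(H)$ is defined with identity structure $2$-morphisms, we have $F(i) = H \circ P(i)$ for every $i \in I_0$ and $\ps(a) = H\ph(a)$ for every $a \in I_1$. Consequently, for all $i,j\in I_0$ and $x \in X(i)_0$, $y\in X(j)_0$, the map
$$
(F,\ps)^{(1)}_{x,y} \colon \Ds_{a\in I(i,j)} X(j)(X(a)x, y) \to \calC(F(i)x, F(j)y)
$$
sends $(f_a)_a$ to $\sum_a H(P(j)f_a) \circ H(\ph(a)x) = H\!\left(\sum_a P(j)(f_a) \circ \ph(a)x\right) = H\bigl((P,\ph)^{(1)}_{x,y}(f_a)_a\bigr)$. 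By \prpref{prp:can-covering}, $(P,\ph)^{(1)}_{x,y}$ is the identity on $\Gr(X)({}_ix, {}_jy) = \Ds_{a\in I(i,j)} X(j)(X(a)x, y)$, so $(F,\ps)^{(1)}_{x,y}$ coincides with the Hom-space component $H_{{}_ix, {}_jy} \colon \Gr(X)({}_ix, {}_jy) \to \calC(H({}_ix), H({}_jy))$ of $H$.

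Second, since $\Gr(X)_0 = \{{}_ix \mid i \in I_0,\ x \in X(i)_0\}$, the condition that $(F,\ps)^{(1)}_{x,y}$ be an isomorphism for all $i, j, x, y$ is exactly the condition that $H$ is fully faithful. Similarly, denseness of $(F,\ps)$ asks that every $c \in \calC_0$ be isomorphic to some $F(i)(x) = H({}_ix)$; since every object of $\Gr(X)$ has the form ${}_ix$, this is exactly essential surjectivity of $H$.

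Combining these two observations gives the biconditional: $(F,\ps)$ is an $I$-covering if and only if $H$ is both fully faithful and dense, which is precisely the condition for $H$ to be an equivalence of $\k$-categories. There is no real obstacle here; the argument is a direct bookkeeping of the definitions, with \prpref{prp:can-covering} doing all the work by reducing the Hom-space map of $(F,\ps)^{(1)}$ to that of $H$.
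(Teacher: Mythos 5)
Your proof is correct and follows essentially the same route as the paper: both reduce $(F,\ps)^{(1)}_{x,y}$ to the Hom-component $H_{{}_ix,{}_jy}$ via the commutative triangle supplied by Proposition~\ref{prp:can-covering}, and both match denseness of $(F,\ps)$ with essential surjectivity of $H$. Your write-up merely makes explicit the unwinding of the composite $1$-morphism ($F(i)=H\circ P(i)$, $\ps(a)=H\ph(a)$), which the paper leaves implicit.
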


\begin{proof}
Obviously $(F, \ps)$ is dense if and only if so is $H$.
Further for each $i, j \in I_0$, $x \in X(i)$ and $y \in X(j)$,
$(F, \ps)^{(1)}_{x,y}$ is an isomorphism if and only if so is $H_{{}_ix, {}_jy}$
because 
we have a commutative diagram
$$
\xymatrix{
\Ds_{a\in I(i,j)}X(j)(X(a)x, y) & \calC(F(i)x, F(j)y)\\
\Gr(X)({}_ix, {}_jy)
\ar^(.58){(F, \ps)^{(1)}_{x,y}}"1,1";"1,2"
\ar@{=}_{(P,\ph)^{(1)}_{x,y}}"1,1";"2,1"
\ar_{H_{{}_ix, {}_jy}}"2,1";"1,2"
}
$$
by Proposition \ref{prp:can-covering}.
\end{proof}

\section{Adjoints}

In this section we will show that the Grothendieck construction is a strict left adjoint to
the diagonal 2-functor, and that $I$-coverings are essentially given
by the unit of the adjunction.

\begin{dfn}
Let $\calC \in \kCat$.
We define a functor $Q_{\calC} \colon \Gr(\De(\calC)) \to \calC$ by
\begin{itemize}
\item $Q_{\calC}({}_ix) := x$ for all ${}_ix \in \Gr(\De(\calC))_0$; and
\item $Q_{\calC}((f_a)_{a \in I(i,j)}):= \sum_{a\in I(i,j)}f_a$
for all $(f_a)_{a \in I(i,j)} \in \Gr(\De(\calC))({}_ix, {}_jy)$
and for all ${}_ix, {}_jy \in \Gr(\De(\calC))_0$.
\end{itemize}
It is easy to verify that $Q_{\calC}$ is a $\k$-functor.
\end{dfn}

\begin{thm}\label{Gr-De-adjoint}
The $2$-functor $\Gr\colon \Colax(I, \kCat) \to \kCat$ is a strict left $2$-adjoint to
the $2$-functor $\De\colon \kCat \to \Colax(I, \kCat)$.
The unit is given by the family
of canonical morphisms $(P_X, \ph_X) \colon X \to \De(\Gr(X))$
indexed by $X \in \Colax(I, \kCat)$, and the counit is given by the family of
$Q_{\calC} \colon \Gr(\De(\calC)) \to \calC$
defined as above indexed by $\calC \in \kCat$.

In particular, $(P_X, \ph_X)$ has a strict universality in the comma category
$(X \downarrow \De)$, i.e., for each $(F, \ps) \colon X \to \De(\calC)$ in
$\Colax(I, \kCat)$ with $\calC \in \kCat$,
there exists a unique $H \colon \Gr(X) \to \calC $ in $\kCat$ such that
the following is a commutative diagram in $\Colax(I, \kCat)$:
$$
\xymatrix{
X & \De(\calC).\\
\De(\Gr(X))
\ar^{(F,\ps)}"1,1";"1,2"
\ar_{(P_X,\ph_X)} "1,1"; "2,1"
\ar@{-->}_{\De(H)} "2,1";"1,2"
}
$$
\end{thm}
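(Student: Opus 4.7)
The plan is to construct, for each $X \in \Colax(I, \kCat)_0$ and $\calC \in \kCat_0$, an isomorphism of categories
$$
\Phi_{X, \calC} \colon \Colax(I, \kCat)(X, \De(\calC)) \longrightarrow \kCat(\Gr(X), \calC)
$$
that is 2-natural in both variables, and then to identify the unit and counit with the morphisms described. Given a 1-morphism $(F, \ps) \colon X \to \De(\calC)$, the requirement $\De(H) \circ (P_X, \ph_X) = (F, \ps)$ forces $H({}_ix) = F(i)x$ on objects, while the fact that $(P_X, \ph_X)^{(1)}_{x, y}$ is the identity (Proposition \ref{prp:can-covering}) forces
$$
H\bigl((f_a)_{a \in I(i,j)}\bigr) := \sum_{a \in I(i,j)} F(j)(f_a) \circ \ps(a)(x)
$$
on morphisms. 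Setting $\Phi_{X, \calC}(F, \ps) := H$ makes the definition of $H$ canonical and gives uniqueness for free; the remaining work is to show that $H$ is actually a $\k$-functor, that $\Phi$ is a functor in $(F, \ps)$, and that it is 2-natural.

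The crux of the verification is that $H$ preserves identities and composition. Preservation of $\id_{{}_ix} = (\de_{a, \id_i} \et_i x)_a$ reduces to $F(i)(\et_i x) \circ \ps(\id_i)(x) = \id_{F(i)x}$, which is precisely Remark \ref{rmk:1-mor-to-De}(a) evaluated at $x$. Preservation of composition is the main calculation: expanding $H(g \circ f)$ introduces the coherence $\th_{b, a}$ coming from the composition rule in $\Gr(X)$, whereas $H(g) \circ H(f)$ naturally produces nested occurrences of $\ps(a)$ and $\ps(b)$. The two expressions are reconciled by combining (i) Remark \ref{rmk:1-mor-to-De}(b), which supplies $\ps(b)X(a) \circ \ps(a) = F(k)\th_{b, a} \circ \ps(ba)$, with (ii) the naturality of each $\ps(b) \colon F(j) \To F(k) X(b)$ applied to the morphism $f_a \colon X(a) x \to y$ in $X(j)$. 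This interlocking of $\th$-coherence with $\ps$-coherence is the only nontrivial step of the whole proof, and I expect it to be the main obstacle to write out carefully.

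For 2-morphisms, I would send $\ze \colon (F, \ps) \To (F', \ps')$ to the natural transformation $\beta \colon H \To H'$ with $\beta_{{}_ix} := \ze(i)(x)$; its naturality against $(f_a)_a$ falls out from naturality of each $\ze(j)$ combined with the 2-morphism axiom $\ps'(a) \circ \ze(i) = \ze(j) X(a) \circ \ps(a)$ specialized to $X' = \De(\calC)$. The inverse of $\Phi_{X, \calC}$ is $G \mapsto \De(G) \circ (P_X, \ph_X)$; checking that these are mutually inverse and 2-natural in $X$ and $\calC$ is then a routine unwinding of definitions. Finally, taking $X = \De(\calC)$ and $(F, \ps) = \id_{\De(\calC)}$ one reads off that the counit at $\calC$ is exactly the functor $Q_{\calC}$ defined above, and the triangle identities for the adjunction are forced by the uniqueness established in the construction of $H$, so no further calculation is needed to conclude the ``in particular'' statement.
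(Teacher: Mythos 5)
Your proposal is correct, but it establishes the adjunction by a different route than the paper. You build the hom-isomorphism $\Phi_{X,\calC}\colon \Colax(I,\kCat)(X,\De(\calC)) \to \kCat(\Gr(X),\calC)$ directly, reading off $H$ from the forced formulas $H({}_ix)=F(i)x$ and $H((f_a)_a)=\sum_a F(j)(f_a)\circ\ps(a)x$ and then verifying functoriality of $H$ via Remark \ref{rmk:1-mor-to-De}(b) together with the naturality of each $\ps(b)$ — that identification of the two coherence ingredients is exactly right, and it is indeed the only nontrivial computation. The paper instead takes the unit $\et=((P_X,\ph_X))_X$ and counit $\ep=(Q_\calC)_\calC$ as given and verifies the two triangle identities $\De\ep\cdot\et\De=\id_\De$ and $\ep\Gr\cdot\Gr\et=\id_{\Gr}$ by direct computation, deducing the universal property afterwards from the general formalism. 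The trade-off: the paper's route is shorter because the composition-preservation argument you carry out by hand is already absorbed into the (previously defined) 2-functoriality of $\Gr$ — note that your $H$ is precisely $Q_\calC\circ\Gr(F,\ps)$ — whereas your route delivers the ``in particular'' universality statement (the part actually invoked in Corollary \ref{covering-Gr} and in the proof of Theorem \ref{mainthm2}) directly and makes the uniqueness of $H$ transparent rather than a consequence of abstract adjunction yoga. Both proofs leave the 2-naturality checks as routine, so neither is more complete on that point.
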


\begin{proof}
For simplicity set $\et:=((P_X, \ph_X))_{X \in \Colax(I, \kCat)_0}$ and
$\ep:= (Q_{\calC})_{\calC \in \kCat_0}$.

\begin{clm}
$\De \ep \cdot \et \De = \id_{\De}$.
\end{clm}
Indeed,
Let $\calC \in \kCat$.
It is enough to show that
$\De(Q_{\calC}) \cdot (P_{\De(\calC)}, \ph_{\De(\calC)}) = \id_{\De(\calC)}$.
Now 
$$
\begin{aligned}
\mathrm{LHS}
 &=\left((Q_{\calC}P_{\De(\calC)}(i))_{i\in I_0}, (Q_{\calC}\ph_{\De(\calC)}(a))_{a\in I_1}\right)
 , \text{ and}\\
\mathrm{RHS}
 &=\left((\id_{\calC})_{i\in I_0}, (\id_{\id_{\calC}})_{a\in I_1}\right).
\end{aligned}
 $$
{\it First entry$\colon$}Let $i \in I$.
Then $Q_{\calC}P_{\De(\calC)}(i) = \id_{\calC}$
because for each $x, y \in \calC_0$ and each $f\in \calC(x, y)$ we have
$(Q_{\calC}P_{\De(\calC)}(i))(x) = Q_{\calC}({}_ix) = x$; and
$(Q_{\calC}P_{\De(\calC)}(i))(f) = (\de_{a, \id_i}f \cdot ((\et_{\De(\calC)})_i\, x))_{a\in I_1} =
\sum_{a \in I(i, i)}\de_{a, \id_i}f = f$.

{\it Second entry$\colon$} Let $a \colon i \to j$ in $I$.
Then $Q_{\calC}\ph_{\De(\calC)}(a) = \id_{\id_{\calC}}$ because
for each $x \in \calC_0$ we have
$Q_{\calC}\left(\ph_{\De(\calC)}(a)x\right)
= Q_{\calC}\left((\de_{b,a}\id_{\De(\calC)(a)x})_{b\in I(i,j)}\right)
=\sum_{b\in I(i,j)}\de_{b,a}\id_x = \id_x = \id_{\id_{\calC}x}$.
This shows that $\mathrm{LHS} = \mathrm{RHS}$.

\begin{clm}
$\ep \Gr \cdot \Gr \et = \id_{\Gr}$.
\end{clm}
Indeed, let $X \in \Colax(I, \kCat)$.
It is enough to show that
$Q_{\Gr(X)}\cdot \Gr(P_X, \ph_X) = \id_{\Gr(X)}$.

{\it On objects$\colon$} Let ${}_ix \in \Gr(X)_0$.
Then $Q_{\Gr(X)}\left(\Gr(P_X, \ph_X)(x)\right)
= Q_{\Gr(X)}({}_i(P_X(i)x))
= {}_ix$.

{\it On morphisms$\colon$} Let $f = (f_a)_{a\in I(i,j)} \colon {}_ix \to {}_jy$ be in $\Gr(X)$.
Then 
$Q_{\Gr(X)}\Gr(P_X, \ph_X)(f)
= Q_{\Gr(X)}((P_X(j)(f_a)\circ\ph_X(a)x)_{a\in I(i,j)})
= \sum_{a\in I(i,j)}P_X(j)(f_a)\circ\ph_X(a)x
= (P_X, \ph_X)^{(1)}_{x,y}(f) = f$.
Thus the claim holds.

The two claims above prove the assertion.
\end{proof}

\begin{cor}\label{covering-Gr}
Let $(F, \ps) \colon X \to \De(\calC)$ be in $\Colax(I, \kCat)$.
Then the following are equivalent.
\begin{enumerate}
\item $(F, \ps)$ is an $I$-covering;
\item There exists an equivalence $H \colon \Gr(X) \to \calC$ such that
the diagram
$$
\xymatrix{
X & \De(\calC)\\
\De(\Gr(X))
\ar^{(F, \ps)}"1,1";"1,2"
\ar_{(P_X, \ph_X)}"1,1";"2,1"
\ar_{\De(H)}"2,1";"1,2"
}
$$
is strictly commutative.
\end{enumerate}
\end{cor}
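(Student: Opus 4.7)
The plan is to deduce this corollary directly by combining the strict universal property of the canonical morphism (Theorem \ref{Gr-De-adjoint}) with the characterization of $I$-coverings that factor through $(P_X, \ph_X)$ (Lemma \ref{covering-equivalence}). Essentially, the work has already been done; all that remains is to assemble the two ingredients.

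For the implication (2)$\Rightarrow$(1), I would start from a given equivalence $H \colon \Gr(X) \to \calC$ with $\De(H) \circ (P_X, \ph_X) = (F, \ps)$. Then $(F, \ps)$ is exactly of the form appearing in Lemma \ref{covering-equivalence}, so that lemma immediately yields that $(F, \ps)$ is an $I$-covering.

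For the converse (1)$\Rightarrow$(2), I would invoke the strict universality in the comma category $(X \downarrow \De)$ established in Theorem \ref{Gr-De-adjoint}: given the $1$-morphism $(F, \ps) \colon X \to \De(\calC)$, there exists a unique $\k$-functor $H \colon \Gr(X) \to \calC$ making
\[
\De(H) \circ (P_X, \ph_X) = (F, \ps)
\]
(i.e., the displayed diagram is strictly commutative). Since $(F, \ps)$ now factors through $(P_X, \ph_X)$ as in Lemma \ref{covering-equivalence}, that lemma applied in the other direction ensures that $H$ must be an equivalence, since $(F, \ps)$ is assumed to be an $I$-covering.

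There is no serious obstacle: the only subtlety is that the statement asks for \emph{strict} commutativity of the triangle rather than commutativity up to a $2$-isomorphism, but this is precisely what the strictness of the adjunction in Theorem \ref{Gr-De-adjoint} provides. Thus the proof will be short, essentially a two-line application of Theorem \ref{Gr-De-adjoint} together with Lemma \ref{covering-equivalence}.
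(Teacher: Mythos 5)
Your proposal is correct and follows exactly the same route as the paper, which proves the corollary by combining the strict universality of $(P_X,\ph_X)$ from Theorem \ref{Gr-De-adjoint} with Lemma \ref{covering-equivalence}; you have merely spelled out the two directions that the paper leaves implicit.
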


\begin{proof}
This immediately follows by Theorem \ref{Gr-De-adjoint} and Lemma \ref{covering-equivalence}.
\end{proof}

\section{The Module colax functor}

Let $X\colon I \to \kCat$ be a colax functor.
In this section we simplify the definition of
the ``module category'' $\Mod X$ of $X$ as a colax functor $I \to \kCat$
given in our previous paper \cite{Asa-a}.
Recall that the {\em module category} $\Mod \calC$ of
a category $\calC \in \kCat$ is defined to be the functor category
$\kCat(\calC\op, \Mod \k)$, where $\Mod \k$ denotes the
category of $\k$-modules.
Since $\kCat$ is a 2-category, this is extended to a representable 2-functor
$$
\Mod':= \kCat((\blank)\op, \Mod \k) \colon \kCat \to \kAb\coop
$$
(see Notation \ref{ntn:co-op}).
As is easily seen the composite $\Mod' \circ X$ turns out to be a colax functor
$I \to \kAb\coop$, i.e., a contravariant lax functor $I \to \kAb$.
When $X$ is a group action, namely when $I$ is a group $G$ and $X \colon G \to \kCat$
is a functor, the usual module category $\Mod X$ with a $G$-action of $X$
was defined to be the composite functor
$\Mod X:= \Mod' \circ X \circ i$, where $i \colon G  \to G$ is the group anti-isomorphism
defined by $x \mapsto x\inv$ for all $x \in G$.
In this way we can change $\Mod' \circ X$ to a covariant one.
But in general we cannot assume the existence of such an isomorphism $i$.
Instead in this paper we will use a covariant ``pseudofunctor'' $\Mod \colon \kCat \to \kAb$
defined below and will define $\Mod X$ as the composite $\Mod \circ X$,
which can be seen as a ``lax'' extended version of the module category construction of a category with a $G$-action stated above.
We start with a notion of colax functors from a 2-category to a 2-category.
Compare our definitions of colax functors,
left transformations (1-morphisms) and 2-morphisms in the setting of 2-categories given below
with definitions of morphisms, transformations and modifications in the setting of bicategories
(see Leinster \cite{Lei} for instance).

\begin{dfn}
\label{dfn:colax-fun-2cat}
Let $\bfB$ and $\bfC$ be 2-categories.

(1) A {\em colax functor} from $\bfB$ to $\bfC$ is a triple
$(X, \et, \th)$ of data:
\begin{itemize}
\item
a triple $X=(X_0, X_1, X_2)$ of maps $X_i\colon \bfB_i \to \bfC_i$ ($\bfB_i$ denotes the
collection of $i$-morphisms of $\bfB$ for each $i=0,1,2$) preserving domains and codomains of all 1-morphisms and 2-morphisms
(i.e.\ $X_1(\bfB_1(i,j)) \subseteq \bfC_1(X_0i, X_0j)$
for all $i, j \in \bfB_0$ and $X_2(\bfB_2(a,b)) \subseteq \bfC_2(X_1a, X_1b)$
for all $a, b \in \bfB_1$ (we omit the subscripts of $X$ below));
\item
a family $\et:=(\et_i)_{i\in \bfB_0}$ of 2-morphisms $\et_i\colon X(\id_i) \Rightarrow \id_{X(i)}$ in $\bfC$
indexed by $i\in \bfB_0$; and
\item
a family $\th:=(\th_{b,a})_{(b,a)}$ of 2-morphisms
$\th_{b,a} \colon X(ba) \Rightarrow X(b)X(a)$
in $\bfC$ indexed by $(b,a) \in \com(\bfB):=
\{(b,a)\in \bfB_1 \times \bfB_1 \mid ba \text{ is defined}\}$
\end{itemize}
satisfying the axioms:

\begin{enumerate}
\item[(i)]
$(X_1, X_2) \colon \bfB(i,j) \to \bfC(X_0i,X_0j)$ is a functor
for all $i, j \in \bfB_0$;
\item[(ii)]
For each $a\colon i \to j$ in $\bfB_1$ the following are commutative:
$$
\vcenter{
\xymatrix{
X(a\id_i) \ar@{=>}[r]^(.43){\th_{a,\id_i}} \ar@{=}[rd]& X(a)X(\id_i)
\ar@{=>}[d]^{X(a)\et_i}\\
& X(a)\id_{X(i)}
}}
\qquad\text{and}\qquad
\vcenter{
\xymatrix{
X(\id_j a) \ar@{=>}[r]^(.43){\th_{\id_j,a}} \ar@{=}[rd]& X(\id_j)X(a)
\ar@{=>}[d]^{\et_jX(a)}\\
& \id_{X(j)}X(a)
}}\quad;
$$
\item[(iii)]
For each $i \ya{a}j \ya{b} k \ya{c} l$ in $\bfB_1$ the following is commutative: 
$$
\vcenter{
\xymatrix@C=3em{
X(cba) \ar@{=>}[r]^(.43){\th_{c,ba}} \ar@{=>}[d]_{\th_{cb,a}}& X(c)X(ba)
\ar@{=>}[d]^{X(c)\th_{b,a}}\\
X(cb)X(a) \ar@{=>}[r]_(.45){\th_{c,b}X(a)}& X(c)X(b)X(a)
}}\quad;\text{ and}
$$
\item[(iv)]
For each $a, a' \colon i \to j$ and $b, b' \colon j \to k$ in $\bfB_1$
and each $\al \colon a \to a'$, $\be \colon b \to b'$ in $\bfB_2$
the following is commutative:
$$
\xymatrix{
X(ba) & X(b)X(a)\\
X(b'a') & X(b')X(a').
\ar@{=>}^{\th_{b,a}}"1,1";"1,2"
\ar@{=>}^{\th_{b',a'}}"2,1";"2,2"
\ar@{=>}_{X(\be*\al)}"1,1";"2,1"
\ar@{=>}^{X(\be)*X(\al)}"1,2";"2,2"
}
$$
\end{enumerate}

(2) A {\em lax functor} from $\bfB$ to $\bfC$ is a colax functor
from $\bfB$ to $\bfC^{\text{co}}$ (see Notation \ref{ntn:co-op}).

(3) A {\em pseudofunctor} from $\bfB$ to $\bfC$ is a colax functor with
all $\et_i$ and $\th_{b,a}$ 2-isomorphisms.

(4) We define a 2-category $\Colax(\bfB, \bfC)$ having all the colax functors
$\bfB \to \bfC$ as the objects as follows.

{\bf 1-morphisms.}
Let $X = (X, \et, \th)$, $X'= (X', \et', \th')$
be colax functors from $\bfB$ to $\bfC$.
A {\em $1$-morphism} (called a {\em left transformation}) from $X$ to $X'$
is a pair $(F, \ps)$ of data
\begin{itemize}
\item
a family $F:=(F(i))_{i\in \bfB_0}$ of 1-morphisms $F(i)\colon X(i) \to X'(i)$
in $\bfC$
; and
\item
a family $\ps:=(\ps(a))_{a\in \bfB_1}$ of 2-morphisms
$\ps(a)\colon X'(a)F(i) \Rightarrow F(j)X(a)$
$$\vcenter{\xymatrix{
X(i) & X'(i)\\
X(j) & X'(j)
\ar_{X(a)} "1,1"; "2,1"
\ar^{X'(a)} "1,2"; "2,2"
\ar^{F(i)} "1,1"; "1,2"
\ar_{F(j)} "2,1"; "2,2"
\ar@{=>}_{\ps(a)} "1,2"; "2,1"
}}
$$
in $\bfC$  indexed by $a\colon i \to j \text{ in }\bfB_1$
with the property that
\item[(0)]
for each $\al \colon a \To b$ in $\bfB(i,j)$ the following
is commutative:
\begin{equation}\label{eq:naturality-psi}
\vcenter{\xymatrix@C=10ex{
X'(a)F(i) & X'(b)F(i)\\
F(j)X(a) & F(j)X(b),
\ar@{=>}^{X'(\al)F(i)}"1,1";"1,2"
\ar@{=>}_{F(j)X(\al)}"2,1";"2,2"
\ar@{=>}_{\ps(a)}"1,1";"2,1"
\ar@{=>}^{\ps(b)}"1,2";"2,2"
}}
\end{equation}
thus a family of natural transformations of functors
$$
\vcenter{\xymatrix@C=15ex{
\bfB(i,j) & \bfC(X'(i), X'(j))\\
\bfC(X(i), X(j)) & \bfC(X(i), X'(j))
\ar"1,1";"1,2"^{X'}
\ar"1,1";"2,1"_{X}
\ar"1,2";"2,2"^{\bfC(F(i), X'(j))}
\ar"2,1";"2,2"_{\bfC(X(i), F(j))}
\ar@{=>}"1,2";"2,1"_{\ps_{ij}}
}}\quad(i,j\in \bfB_0)
$$
\end{itemize}
satisfying the axioms
\begin{enumerate}
\item[(a)]
For each $i \in \bfB_0$ the following is commutative:
$$
\vcenter{
\xymatrix{
X'(\id_i)F(i) & F(i)X(\id_i)\\
\id_{X'(i)}F(i) & F(i)\id_{X(i)}
\ar@{=>}^{\ps(\id_i)} "1,1"; "1,2"
\ar@{=} "2,1"; "2,2"
\ar@{=>}_{\et'_iF(i)} "1,1"; "2,1"
\ar@{=>}^{F(i)\et_i} "1,2"; "2,2"
}}\quad;\text{ and}
$$
\item[(b)]
For each $i \ya{a} j \ya{b} k$ in $\bfB_1$ the following is commutative:
$$
\xymatrix@C=4pc{
X'(ba)F(i) & X'(b)X'(a)F(i) & X'(b)F(j)X(a)\\
F(k)X(ba) & & F(k)X(b)X(a).
\ar@{=>}^{\th'_{b,a}F(i)} "1,1"; "1,2"
\ar@{=>}^{X'(b)\ps(a)} "1,2"; "1,3"
\ar@{=>}_{F(k)\,\th_{b,a}} "2,1"; "2,3"
\ar@{=>}_{\ps(ba)} "1,1"; "2,1"
\ar@{=>}^{\ps(b)X(a)} "1,3"; "2,3"
}
$$
\end{enumerate}

{\bf 2-morphisms.}
Let $X = (X, \et, \th)$, $X'= (X', \et', \th')$
be colax functors from $\bfB$ to $\bfC$, and
$(F, \ps)$, $(F', \ps')$ 1-morphisms from $X$ to $X'$.
A {\em $2$-morphism} from $(F, \ps)$ to $(F', \ps')$ is a
family $\ze= (\ze(i))_{i\in \bfB_0}$ of 2-morphisms
$\ze(i)\colon F(i) \Rightarrow F'(i)$ in $\bfC$
indexed by $i \in \bfB_0$
such that the following is commutative for all $a\colon i \to j$ in $\bfB_1$:
$$
\xymatrix@C=4pc{
X'(a)F(i) & X'(a)F'(i)\\
F(j)X(a) & F'(j)X(a).
\ar@{=>}^{X'(a)\ze(i)} "1,1"; "1,2"
\ar@{=>}^{\ze(j)X(a)} "2,1"; "2,2"
\ar@{=>}_{\ps(a)} "1,1"; "2,1"
\ar@{=>}^{\ps'(a)} "1,2"; "2,2"
}
$$

{\bf Composition of 1-morphisms.}
Let $X = (X, \et, \th)$, $X'= (X', \et', \th')$ and $X''= (X'', \et'', \th'')$
be colax functors from $\bfB$ to $\bfC$, and
let $(F, \ps)\colon X \to X'$, $(F', \ps')\colon X' \to X''$
be 1-morphisms.
Then the composite $(F', \ps')(F, \ps)$ of $(F, \ps)$ and
$(F', \ps')$ is a 1-morphism from $X$ to $X''$ defined by
$$
(F', \ps')(F, \ps):= (F'F, \ps'\circ\ps),
$$
where $F'F:=((F'(i)F(i))_{i\in \bfB_0}$ and for each $a\colon i \to j$ in $\bfB$,
$
(\ps'\circ\ps)(a):= F'(j)\ps(a)\circ \ps'(a)F(i)
$
is the pasting of the diagram
$$
\xymatrix@C=4pc{
X(i) & X'(i) & X''(i)\\
X(j) & X'(j) & X''(j).
\ar_{X(a)} "1,1"; "2,1"
\ar_{X'(a)} "1,2"; "2,2"
\ar^{F(i)} "1,1"; "1,2"
\ar_{F(j)} "2,1"; "2,2"
\ar@{=>}_{\ps(a)} "1,2"; "2,1"
\ar^{X''(a)} "1,3"; "2,3"
\ar^{F'(i)} "1,2"; "1,3"
\ar_{F'(j)} "2,2"; "2,3"
\ar@{=>}_{\ps'(a)} "1,3"; "2,2"
}
$$
\end{dfn}

\begin{rmk}
(1) Note that a (strict) 2-functor from $\bfB$ to $\bfC$ is a pseudofunctor with
all $\et_i$ and $\th_{b,a}$ identities.

(2) By regarding the category $I$ as a 2-category with all 2-morphisms identities,
the definition (1) of colax functors above coincides
with Definition \ref{dfn:colax-fun}.

(3) When $\bfB = I$, the definition (4) of $\Colax(\bfB, \bfC)$
above coincides with that of $\Colax(I, \bfC)$ given before.
\end{rmk}

\begin{exm}
\label{exm:Mod-D}
(1) Since $\kCat$ is a 2-category, 
$
\Mod':= \kCat((\blank)\op, \Mod \k) \colon \kCat \to \kAb\coop
$
is a 2-functor, which we can regard as a contravariant lax functor
$$
\Mod':= \kCat((\blank)\op, \Mod \k) \colon \kCat \to \kAb.
$$

(2) We define a pseudofunctor $\Mod \colon \kCat \to \kAb$ as follows.
\begin{itemize}
\item For each $\calC \in \kCat_0$ we set $\Mod \calC:= \Mod' \calC$.
\item For each $F \colon \calC \to \calC'$ in $\kCat_1$ we set
$\Mod F:= \blank\otimes_{\calC}\ovl{F} \colon \Mod\calC \to \Mod\calC'$, where $\ovl{F}$ is the $\calC$-$\calC'$-bimodule
defined by $\ovl{F}(x, y):= \calC'(y, F(x))$ for all $x \in \calC_0$, $y \in \calC'_0$,
which we sometimes write as $\ovl{F}:= \calC'(?, F(\blank))$.
\item For each $\al \colon F \To G$ in $\kCat_2$ (with $F, G \colon \calC \to \calC'$ in $\kCat_1$)
we define $\Mod \al \colon \Mod F \To \Mod G$ by setting
$(\Mod \al)x := \calC'(?, \al x) \colon \calC'(?, Fx) \To \calC'(?, Gx)$ for all $x \in \calC_0$.
\item For each $\calC \in \kCat$ we define
$\et_\calC \colon \Mod \id_\calC \To \id_{\Mod\calC}$
by setting $\et_\calC M \colon M \otimes_\calC \calC(?,\blank) \to M$
to be the canonical isomorphisms for all $M \in \Mod\calC$.
\item For each pair of functors $\calC \ya{F} \calC' \ya{G} \calC''$ in $\kCat$
we define $\th_{G, F} \colon \Mod GF \To \Mod G \circ \Mod F$ as the inverse
of the canonical isomorphism
$$
\blank\otimes_\calC \calC'(?,F(\blank))\otimes_{\calC'} \calC''(?, G(\blank)) 
\To \blank\otimes_\calC \calC''(?, GF(\blank)).
$$
\end{itemize}
It is straightforward to check that this defines a pseudofunctor.

(3) Denote by $\kModCat$ the
2-subcategory of $\kAb$ consisting of the following:
\begin{itemize}
\item objects: $\Mod \calC$ with $\calC \in \kCat_0$,
\item 1-morphisms: functors between objects having exact right adjoints, and
\item 2-morphisms: all natural transformations between 1-morphisms.
\end{itemize}
Then note that the pseudofunctor $\Mod \colon \kCat \to \kAb$ defined above can be seen
as a pseudofunctor $\kCat \to \kModCat$. 
For each $\Mod \calC$ with $\calC \in \kCat_0$ we denote by $\calK_p(\Mod\calC)$
the full subcategory of the homotopy category $\calK(\Mod\calC)$ of $\Mod\calC$ consisting of
{\em homotopically projective} objects $M$, i.e., objects $M$ such that $\calK(\Mod\calC)(M, A) =0$
for all acyclic objects $A$.
Recall that there is a natural embedding
$\bfj_{\calC} \colon \calK_p(\Mod\calC) \to \calD(\Mod\calC)$
having a left adjoint $\bfp_{\calC}$ such that there exists a quasi-isomorphism
$\et_{\calC}M \colon \bfj_{\calC}\bfp_{\calC}M \to M$ for each $M \in \calD(\Mod \calC)$
and that $\bfp_{\calC}\bfj_{\calC} = \id_{\calK_p(\Mod\calC)}$.
Then we can define a pseudofunctor $\calD \colon \kModCat \to \kTri$ as follows.
\begin{itemize}
\item For each $\Mod\calC$ in $\kModCat_0$ with $\calC \in \kCat$ we set
$\calD(\Mod\calC)$ to be the derived category of $\Mod\calC$.
\item For each $F \colon \Mod\calC \to \Mod\calC'$ in $\kModCat_1$,
$F$ naturally induces a functor $\calK F \colon \calK(\Mod\calC) \to \calK(\Mod\calC')$,
which restricts to a functor $\calK_p F \colon \calK_p(\Mod\calC) \to \calK_p(\Mod\calC')$
because $F$ has an exact right adjoint.
Then we set $\calD F$ to be the left derived functor
$\bfL F\colon \calD(\Mod\calC) \to \calD(\Mod\calC')$
of $F$, which is defined as the composite $\bfL F:= \bfj_{\calC'} (\calK_p F)\bfp_{\calC}$.
\item For each $\al \colon F \To F'$ in $\kModCat_2$
with $F, F' \colon \Mod \calC \to \Mod \calC'$ in $\kModCat_1$,
$\al$ naturally induces a natural transformation $\calK_p \al \colon \calK_p F \To \calK_p F'$.
Then we define $\calD \al:= \bfj_{\calC'} (\calK_p \al) \bfp_{\calC}$.
\item We define $\et_{\Mod \calC} \colon \calD(\id_{\Mod \calC}) (=\bfj_{\calC}\bfp_{\calC}) \To \id_{\calD(\Mod \calC)}$
by $\et_{\Mod\calC}:= (\et_{\calC}M)_{M \in \calD(\Mod\calC)}$.
\item Note that for each $\Mod \calC \ya{F} \Mod \calC' \ya{F'} \Mod \calC''$
in $\kModCat_1$
we have $\bfL(F'\circ F) = \bfL F' \circ \bfL F$
because $\bfp_{\calC'}\bfj_{\calC'}= \id_{\calK_p(\Mod \calC)}$.
We define $\th_{F',F} \colon \bfL(F'\circ F) \To \bfL F' \circ \bfL F$ as the identity
$\id_{\bfL(F'\circ F)}$.
\end{itemize}
It is straightforward to check that this defines a pseudofunctor.
\end{exm}

\begin{exm}
\label{exm:prj-Kb}
(1) We define a pseudofunctor $\prj \colon \kCat \to \kadd$
as the subpseudofunctor of $\Mod \colon \kCat \to \kAb \incl \kadd$
by setting $\prj \calC$ to be the full subcategory of $\Mod \calC$
consisting of finitely generated projective $\calC$-modules
for all $\calC \in \kCat_0$,
where $\kadd$ is the full 2-subcategory of $\kCat$ consisting of additive $\k$-categories.
Then for each $F \colon \calC \to \calC'$ in $\kCat_1$
and each $x \in \calC_0$ we have
\begin{equation}\label{prj-representables}
(\prj F)(\calC(\blank, x)) = \calC(\blank, x) \ox_{\calC}\ovl{F} \iso \calC'(\blank, F(x)).
\end{equation}
Note that we can define two 2-functors
$\ds \colon \kCat \to \kadd$ and $\sic \colon \kadd \to \kadd$
by forming formal additive hulls (see e.g., \cite[Subsection 4.1]{Asa99})
and by taking split idempotent completions (see e.g., \cite[Definition 3.1]{Asa11}),
respectively.
Then the Yoneda embeddings $Y_{\calC}\colon \calC \to \prj\calC$,
$x \mapsto \calC(\blank, x)$ ($\calC \in \kCat_0$) induce
a natural 2-isomorphism $Y \colon \sic\circ\ds \To \prj$:
$$
\xymatrix{
\kCat && \kadd.\\
&\kadd
\ar^{\prj}"1,1";"1,3"
\ar_{\ds}"1,1";"2,2"
\ar_{\sic}"2,2";"1,3"
\ar@{=>}_Y^{\iso}"2,2";"1,2"
}
$$

(2) A 2-functor $\Kb \colon \kadd \to \kTri$ is canonically defined by setting
$\Kb(\calC)$ to be the homotopy category of bounded complexes in $\calC$
for all $\calC \in \kadd$.
Then the composite pseudofunctor $\Kb\circ\prj \colon \kCat \to \kTri$
turns out to be a subpseudofunctor of $\calD\circ\Mod \colon \kCat \to \kTri$.
\end{exm}

The following is a useful tool to define new colax functors from an old one by composing with
pseudofunctors.
The proof will be given in the last section.

\begin{thm}\label{comp-pseudofun}
Let $\bfB, \bfC$ and $\bfD$ be $2$-categories and $V \colon \bfC \to \bfD$ a
pseudofunctor.
Then the obvious correspondence $($see subsection \ref{dfn-corr} for details$)$
$$
\Colax(\bfB, V) \colon \Colax(\bfB, \bfC) \to \Colax(\bfB, \bfD)
$$
turns out to be a pseudofunctor.
\end{thm}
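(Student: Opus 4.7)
The plan is to spell out the correspondence using the coherence 2-isomorphisms $\et^V, \th^V$ of $V$ and then verify all pseudofunctor axioms by systematic pasting. On objects, I send a colax functor $X = (X, \et^X, \th^X)$ to $V \circ X$ equipped with $\et^{VX}_i := \et^V_{X(i)} \cdot V(\et^X_i)$ and $\th^{VX}_{b,a} := \th^V_{X(b), X(a)} \cdot V(\th^X_{b,a})$. On a 1-morphism $(F, \ps) \colon X \to X'$, I set $\Colax(\bfB, V)(F, \ps) := (V \circ F, \tilde{\ps})$ where $\tilde{\ps}(a)$ is the pasting
$VX'(a)VF(i) \overset{(\th^V)\inv}{\To} V(X'(a) F(i)) \overset{V \ps(a)}{\To} V(F(j) X(a)) \overset{\th^V}{\To} VF(j) VX(a)$.
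On a 2-morphism $\ze$, I set $\Colax(\bfB, V)(\ze)(i) := V(\ze(i))$.

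Next I would verify that the images really lie in $\Colax(\bfB, \bfD)$. There are four compatibility checks: (i) the colax axioms (i)--(iv) of Definition \ref{dfn:colax-fun-2cat} for $V \circ X$, (ii) the naturality property (0) together with axioms (a) and (b) for $(VF, \tilde{\ps})$, (iii) the 2-morphism compatibility for $V\ze$, and (iv) functoriality on hom-categories. Each such diagram decomposes into one region where the corresponding axiom for $X$ (respectively $(F, \ps)$, $\ze$) is invoked and surrounding regions expressing naturality of $\et^V, \th^V$ together with the pseudofunctor coherence of $V$.

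For the pseudofunctor structure on $\Colax(\bfB, V)$ itself I define the unit and composition coherences componentwise by $\et^{CV}_X(i) := \et^V_{X(i)}$ and $\th^{CV}_{G,F}(i) := \th^V_{G(i), F(i)}$; these are 2-isomorphisms because $V$ is a pseudofunctor. The check that each such family assembles into a 2-morphism in $\Colax(\bfB, \bfD)$ amounts to verifying the compatibility diagram against $\tilde{\ps}$, and this is exactly a naturality square for $\et^V, \th^V$ combined with the definition of $\tilde{\ps}$. The pseudofunctor triangle and pentagon axioms for $\Colax(\bfB, V)$ then reduce componentwise to those for $V$.

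The main obstacle will be bookkeeping rather than any genuinely new mathematics. The worst case is axiom (b) for $(VF, \tilde{\ps})$, where $\th^V$, $V(\th^{X'})$, $V\ps$, and $V(\th^X)$ must all be shuffled through a double row of pasting cells; here the associativity pentagon of $V$ is what closes the diagram. An analogous three-dimensional juggling is needed for the pentagon axiom of $\th^{CV}$. Once the pieces defined above are in place, each of these diagrams commutes ``for formal reasons'' by a direct pasting chase, and the proof of the theorem amounts to assembling and checking them.
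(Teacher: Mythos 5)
Your proposal matches the paper's own proof essentially step for step: the same formulas for $\et^{VX}$, $\th^{VX}$, the conjugated $\ps_V(a)=\th^V_{F(j),X(a)}\cdot V(\ps(a))\cdot(\th^V_{X'(a),F(i)})^{-1}$, the componentwise $V\ze$, and the unit/composition coherences $(\et^V_{X(i)})_i$ and $(\th^V_{F'(i),F(i)})_i$, with all verifications reduced to naturality of $\et^V,\th^V$ and the coherence axioms of $V$ exactly as in the paper's Section 9. This is correct and takes essentially the same route.
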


\begin{dfn}
Let $X = (X, \et, \th) \in \Colax(I, \kCat)$.

(1) We define the {\em module colax functor}
$\Mod X = (\Mod X, \Mod \et, \Mod \th) \colon I \to \kModCat$ of $X$
as the composite $\Mod X:= \Mod \circ X = \Colax(I, \Mod)(X) \colon I \ya{X} \kCat \ya{\Mod} \kModCat$.
By applying Theorem \ref{comp-pseudofun} to $\bfB:= I$, $\bfC:= \kCat$, $\bfD:= \kModCat$
and $V:= \Mod$ (Example \ref{exm:Mod-D}(2)) we see that $\Mod X \in \Colax(I, \kModCat)$.
Then we have
\begin{itemize}
\item
for each $i \in I_0$,
$(\Mod X)(i) = \Mod (X(i))$; and
\item
for each $a \colon i \to j$ in $I$ the functor
$(\Mod X)(a) \colon (\Mod X)(i) \to (\Mod X)(j)$
is given by $(\Mod X)(a) = \text{-}\otimes_{X(i)}\ovl{X(a)}$,
where $\ovl{X(a)}
$
is an $X(i)$-$X(j)$-bimodule defined by 
$$
\ovl{X(a)}(x, y):= X(j)(y, X(a)(x))
$$
for all $x \in X(i)_0$ and $y \in X(j)_0$.
\end{itemize}

(2) By Theorem \ref{comp-pseudofun} and Example \ref{exm:Mod-D} we can define a colax functor
$\calD(\Mod X) \in \Colax(I, \kTri)$ as the composite $\calD(\Mod X):= \calD \circ \Mod X$,
which we call the {\em derived module colax functor} of $X$.
Then for each $a\colon i \to j$ in $I$,
$\calD(\Mod X)(i) \xrightarrow{\calD(\Mod X)(a)} \calD(\Mod X)(j)$
is equal to
$$\calD(\Mod X(i)) \xrightarrow{\text{-}\Lox_{X(i)}\ovl{X(a)}}\calD(\Mod X(j)).$$

(3) By Theorem \ref{comp-pseudofun} and Example \ref{exm:prj-Kb}
we can define a pseudofunctor
$$
\Colax(I, \Kb\circ\prj) \colon \Colax(I, \kCat) \to \Colax(I, \kTri)
$$
sending each $X \in \Colax(I, \kCat)$ to $\Kb(\prj X)$.
By the remark in Example \ref{exm:prj-Kb}(2) $\Kb(\prj X)$ is
a colax subfunctor of $\calD(\Mod X)$.
\end{dfn}

\begin{rmk}
Let $\calC \in \kCat_0$.
Then it is obvious by definitions that
$$
\De(\Kb(\prj \calC)) = \Kb(\prj \De(\calC)).
$$
\end{rmk}

\begin{prp}
\label{precovering-preserved}
The pseudofunctor $\Kb\circ\prj$ preserves $I$-precoverings, that is,
if $(F, \ps) \colon X \to \De(\calC)$ is an $I$-precovering in $\Colax(I, \kCat)$
with $\calC \in \kCat_0$, then
so is $\Kb(\prj (F, \ps)) \colon \Kb(\prj X) \to \De(\Kb(\prj \calC))$ in $\Colax(I, \kTri)$. 
\end{prp}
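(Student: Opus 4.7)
The plan is to reduce the precovering claim in stages that mirror the factorization of $\Kb \circ \prj$ through the Yoneda embedding, the formation of finite additive hulls, the split idempotent completion, and finally passage to bounded complexes; the precovering isomorphism is verified at each stage and then transferred along the next.

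I first verify the assertion on representables in $\prj X$. Fix $x \in X(i)_0$, $y \in X(j)_0$ and set $P := X(i)(\blank,x)$, $Q := X(j)(\blank,y)$. By the identification (6.1) of representables under $\prj F$, together with the analogous identifications for $\prj X(a)$, $\prj F(i)$ and $\prj F(j)$, and by the Yoneda lemma, we have canonical isomorphisms
$$
\bigoplus_{a\in I(i,j)} \prj X(j)(\prj X(a)P,\,Q) \iso \bigoplus_{a\in I(i,j)} X(j)(X(a)x,\,y)
$$
and $\prj\calC(\prj F(i)P,\,\prj F(j)Q) \iso \calC(F(i)x,\,F(j)y)$. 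Under these identifications, the map induced by $\Kb(\prj(F,\ps))$ on representables becomes exactly the original $(F,\ps)^{(1)}_{x,y}$ (this is where naturality of Yoneda and the definition of $\prj\ps(a)$ as the transform corresponding to $\ps(a)x$ are used), which is an isomorphism by hypothesis.

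Next I extend from representables to all of $\prj X$. Using Example~\ref{exm:prj-Kb}(1), every object of $\prj X(i)$ and $\prj X(j)$ is a direct summand of a finite direct sum of representables, i.e. $\prj \cong \sic\circ\ds\circ Y$. Since both sides of the precovering map are additive bifunctors in each argument and invariant under splitting idempotents, the isomorphism on representables propagates to all of $\prj X$, so that $\prj(F,\ps) \colon \prj X \to \De(\prj\calC)$ is an $I$-precovering in $\Colax(I,\kadd)$.

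Finally I pass from $\prj X$ to $\Kb(\prj X)$. For $P \in \Kb(\prj X(i))_0$ and $Q \in \Kb(\prj X(j))_0$, form the Hom-complexes whose $n$-th components are
$$
\prod_{p}\prj X(j)(\prj X(a)P^{p},\,Q^{p+n}) \quad\text{and}\quad \prod_{p}\prj\calC(\prj F(i)P^{p},\,\prj F(j)Q^{p+n}).
$$
Boundedness of $P$ and $Q$ makes each such product a finite direct sum, so it commutes with the finite/arbitrary direct sum $\bigoplus_{a\in I(i,j)}$. Applying the $\prj$-level precovering degreewise and invoking naturality of the isomorphism in both variables (which makes it commute with the differentials built from $d_P$ and $d_Q$), one obtains an isomorphism of complexes of $\k$-modules
$$
\bigoplus_{a\in I(i,j)} \Hom^{\bullet}_{\prj X(j)}(\prj X(a)P,\,Q) \iso \Hom^{\bullet}_{\prj\calC}(\prj F(i)P,\,\prj F(j)Q).
$$
Because $H^{0}$ is additive, taking $H^{0}$ preserves direct sums and yields the required isomorphism on morphism spaces in the homotopy categories, which is precisely $\Kb(\prj(F,\ps))^{(1)}_{P,Q}$ by the construction of $\Kb(\prj(F,\ps))$ through Theorem~\ref{comp-pseudofun}.

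The main obstacle is the bookkeeping: at each reduction step one must check that the map obtained really is $\Kb(\prj(F,\ps))^{(1)}$, i.e.\ that the transported isomorphisms respect the 2-cells $\ps(a)$ (and implicitly the coherence data $\th_{b,a}$ inherited through the pseudofunctor $\Kb\circ\prj$). This reduces at each stage to naturality of Yoneda, additivity, and the explicit formula for composition with $\ps(a)$ in the definition of $(F,\ps)^{(1)}$, so no essentially new difficulty appears beyond this careful matching.
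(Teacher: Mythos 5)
Your proposal is correct and follows essentially the same route as the paper: the paper's proof simply asserts that the 2-functors $\ds$, $\sic$ and $\Kb$ of Example 6.5 each preserve $I$-precoverings and then invokes the natural 2-isomorphism $Y \colon \sic\circ\ds \To \prj$, which is exactly the factorization (representables, additive hull, idempotent completion, bounded complexes) that you verify stage by stage. Your write-up just supplies the details -- the Yoneda identification on representables, additivity, and the Hom-complex/$H^{0}$ argument -- that the paper leaves as ``straightforward to verify.''
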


\begin{proof}
It is straightforward to verify that the 2-functors $\ds$, $\sic$ and $\Kb$
defined in Example \ref{exm:prj-Kb} preserve $I$-precoverings.
Then the assertion follows from the natural 2-isomorphism $Y \colon \sic\circ\ds \To \prj$.
\end{proof}

\section{Derived equivalences of colax functors}

In this section we recall necessary terminologies and the main theorem
in our previous paper \cite{Asa-a}.
First we cite the following.  See \cite{Asa-a} for the proof.

\begin{lem}
\label{colax-eq}
Let $\bfC$ be a $2$-category and $(F, \ps) \colon X \to X'$
a $1$-morphism in the $2$-category $\Colax(I, \bfC)$.
Then $(F, \ps)$ is an equivalence in $\Colax(I, \bfC)$
if and only if
\begin{enumerate}
\item
For each $i \in I_0$, $F(i)$
is an equivalence in $\bfC$; and
\item
For each $a \in I_1$, $\ps(a)$ is a $2$-isomorphism in $\bfC$
$($namely, $(F,\ps)$ is $I$-equivariant$)$.
\end{enumerate}
\end{lem}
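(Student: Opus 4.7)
The plan is to establish both directions by exploiting the fact that 2-morphisms in $\Colax(I, \bfC)$ are componentwise, so invertibility transfers pointwise to $\bfC$, and conversely componentwise 2-isomorphisms assemble into a 2-isomorphism provided the 2-morphism axiom holds.

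For the \emph{only if} direction, I would let $(G, \ph) \colon X' \to X$ be a quasi-inverse of $(F, \ps)$ with 2-isomorphisms $\al \colon (G, \ph)(F, \ps) \Rightarrow \id_X$ and $\be \colon \id_{X'} \Rightarrow (F, \ps)(G, \ph)$ in $\Colax(I, \bfC)$. Since $\al$ and $\be$ are 2-isomorphisms, each component $\al(i), \be(i)$ is a 2-isomorphism in $\bfC$, so $F(i)$ is an equivalence with quasi-inverse $G(i)$, yielding (1). For (2), I would apply the 2-morphism axiom to $\al$ and $\be$ at each $a \colon i \to j$; the resulting commutative squares, combined with the 1-morphism axioms for $(F, \ps)$ and $(G, \ph)$, allow an explicit two-sided inverse of $\ps(a)$ to be written as a pasting built from $\ph(a)$ and the components of $\al^{-1}, \be^{-1}$.

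For the \emph{if} direction, I would assume (1) and (2). For each $i \in I_0$ pick a quasi-inverse $G(i)$ of $F(i)$ with 2-isomorphisms $\al(i) \colon G(i)F(i) \Rightarrow \id_{X(i)}$ and $\be(i) \colon \id_{X'(i)} \Rightarrow F(i)G(i)$, arranging via the usual trick that $(F(i) \dashv G(i), \be(i), \al(i))$ is an adjoint equivalence; this makes the coherence checks cleaner. The key construction is $\ph(a) \colon X(a)G(i) \Rightarrow G(j)X'(a)$ for each $a \colon i \to j$, forced on us if we insist that $\al$ and $\be$ become 2-morphisms in $\Colax(I, \bfC)$: it is the pasting
\[
X(a)G(i) \Longrightarrow G(j)F(j)X(a)G(i) \Longrightarrow G(j)X'(a)F(i)G(i) \Longrightarrow G(j)X'(a),
\]
whose three factors are $\al(j)\inv X(a)G(i)$, $G(j)\ps(a)\inv G(i)$ (available because $\ps(a)$ is a 2-isomorphism by hypothesis), and $G(j)X'(a)\be(i)\inv$. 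Each factor is a 2-isomorphism, hence so is $\ph(a)$.

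The remaining task is a verification with three parts: $(G, \ph)$ is a 1-morphism in $\Colax(I, \bfC)$; the family $\al = (\al(i))_i$ is a 2-morphism $(G,\ph)(F,\ps) \Rightarrow \id_X$; and $\be = (\be(i))_i$ is a 2-morphism $\id_{X'} \Rightarrow (F,\ps)(G,\ph)$. The last two reduce, by the defining pasting of $\ph(a)$ and the triangle identities, to identities that hold automatically. For the first, the naturality axiom (0) and the unit axiom (a) for $(G, \ph)$ follow after pasting from their counterparts for $(F, \ps)$. I expect the main obstacle to be the composition axiom (b): one must paste the definitions of $\ph(ba)$, $\ph(b)X(a)$, and $X(b)\ph(a)$ into a large diagram and invoke axiom (b) for $(F, \ps)$, the coherence data of $X$ and $X'$, and repeated applications of the adjoint triangle identities to identify the two sides. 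Once this diagram chase is carried out, $(G, \ph)$ is a genuine 1-morphism, $\al$ and $\be$ are 2-isomorphisms in $\Colax(I, \bfC)$, and $(F, \ps)$ is an equivalence.
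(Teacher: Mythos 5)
Your proposal is correct, and I have nothing in this paper to weigh it against literally: the lemma is stated here without proof and explicitly cited from \cite{Asa-a}. Your argument is the standard one that the cited paper uses --- extract componentwise equivalences and invertibility of $\ps(a)$ from the 2-morphism squares for the unit and counit (using that whiskering with the quasi-inverses $G(i)$, $G(j)$ is fully faithful on hom-categories, which is what the components of $\al\inv,\be\inv$ supply), and conversely build $\ph(a)$ as the pasting $G(j)X'(a)\be(i)\inv\circ G(j)\ps(a)\inv G(i)\circ\al(j)\inv X(a)G(i)$ from an adjoint equivalence, with the axioms (a), (b) for $(G,\ph)$ following from those of $(F,\ps)$ and the triangle identities without ever needing to invert $\et$ or $\th$. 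The diagram chases you defer are genuinely routine, so I consider the proposal complete in outline and consistent with the intended proof.
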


\begin{dfn}
Let $X, X' \in \Colax(I, \kCat)$.
Then $X$ and $X'$ are said to be {\em derived equivalent} if
$\calD(\Mod X)$ and $\calD(\Mod X')$ are equivalent
in the 2-category $\Colax(I, \kTri)$.
\end{dfn}

By Lemma \ref{colax-eq} we obtain the following.

\begin{prp}
\label{der-eq-criterion}
Let $X, X' \in \Colax(I, \kCat)$.
Then $X$ and $X'$ are derived equivalent if and only if
there exists a $1$-morphism
$(F, \ps) \colon \calD(\Mod X) \to \calD(\Mod X')$ in $\Colax(I, \kTri)$ such that
\begin{enumerate}
\item
For each $i \in I_0$, $F(i)$
is a triangle equivalence; and
\item
For each $a \in I_1$, $\ps(a)$ is a natural isomorphism
$($i.e., $(F,\ps)$ is $I$-equivariant$)$.
\end{enumerate}
\end{prp}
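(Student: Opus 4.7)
The plan is to derive this proposition directly as a specialization of Lemma \ref{colax-eq}, applied to the 2-category $\bfC = \kTri$ and to the two colax functors $\calD(\Mod X)$ and $\calD(\Mod X')$ in $\Colax(I, \kTri)$.

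First I would unwind the definition of derived equivalence: by the definition given in this section, $X$ and $X'$ are derived equivalent precisely when $\calD(\Mod X)$ and $\calD(\Mod X')$ are equivalent as objects of the 2-category $\Colax(I, \kTri)$. Next, I would apply Lemma \ref{colax-eq} with $\bfC := \kTri$ to translate this equivalence into the existence of a 1-morphism $(F, \ps) \colon \calD(\Mod X) \to \calD(\Mod X')$ with the property that each $F(i)$ is an equivalence in $\kTri$ and each $\ps(a)$ is a 2-isomorphism in $\kTri$.

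Finally, I would identify the notions internal to $\kTri$ with the notions stated in the proposition: an equivalence in $\kTri$ is by definition a triangle equivalence (since 1-morphisms in $\kTri$ are $\k$-linear triangle functors), and a 2-isomorphism in $\kTri$ is a natural isomorphism of such functors. Under these identifications, conditions (1) and (2) in the proposition match exactly the two conditions supplied by Lemma \ref{colax-eq}, yielding the equivalence of the two statements.

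There is no real obstacle here; the work is purely a matter of unpacking definitions and invoking Lemma \ref{colax-eq}. The only subtlety worth mentioning is the consistency check that ``equivalence in $\Colax(I, \kTri)$'' in the definition of derived equivalence is the same 2-categorical notion of equivalence to which Lemma \ref{colax-eq} applies, which is immediate from the construction of $\Colax(I, \bfC)$ as a 2-category.
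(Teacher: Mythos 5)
Your proposal is correct and matches the paper's own argument exactly: the paper derives this proposition by applying Lemma \ref{colax-eq} to the definition of derived equivalence as an equivalence of $\calD(\Mod X)$ and $\calD(\Mod X')$ in $\Colax(I, \kTri)$, which is precisely your route.
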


A $\k$-category $\calA$ is called $\k$-{\em projective}
(resp.\ $\k$-{\em flat}) if
$\calA(x,y)$ are projective (resp.\ flat) $\k$-modules for all $x,y \in \calA_0$.

\begin{dfn}\label{dfn:tilting-colax}
Let $X\colon I \to \kCat$ be a colax functor.
\begin{enumerate}
\item
$X$ is called $\k$-{\em projective} (resp.\ $\k$-{\em flat})
if $X(i)$ are $\k$-projective (resp.\ $\k$-flat) for all $i \in I_0$.
\item
A colax subfunctor $\calT$ of of $\Kb(\prj X)$ is called
{\em tilting} if for each $i \in I_0$,
$\calT(i)$ is a tilting subcategory of $\Kb(\prj X(i))$, namely,
\begin{itemize}
\item
$\Kb(\prj X(i))(U, V[n]) = 0$ for all $U, V \in \calT(i)_0$
and $0 \ne n \in \bbZ$; and
\item
the smallest thick subcategory of $\Kb(\prj X(i))$
containing $\calT(i)$ is equal to $\Kb(\prj X(i))$.
\end{itemize}
\item A tilting colax subfunctor $\calT$ of $\Kb(\prj X)$
with an $I$-equivariant inclusion
$(\si, \ro)\colon$ $\calT \incl \Kb(\prj X)$
is called a {\em tilting colax functor} for $X$.
\end{enumerate}
\end{dfn}

The following was our main theorem in \cite{Asa-a} that gives
a generalization of the Morita type theorem characterizing derived equivalences of categories by Rickard \cite{Rick} and Keller \cite{Ke1} in our setting.

\begin{thm}
\label{mainthm1}
Let $X, X' \in \Colax(I, \kCat)$.
Consider the following conditions.
\begin{enumerate}
\item
$X$ and $X'$ are derived equivalent.
\item
$\Kb(\prj X)$ and $\Kb(\prj X')$ are equivalent
in $\Colax(I, \kTri)$.
\item
There exists a tilting colax functor $\calT$ for $X$
such that $\calT$ and $X'$ are equivalent in $\Colax(I, \kCat)$.
\end{enumerate}
Then 
\begin{enumerate}
\item[(a)]
$(1)$ implies $(2)$.
\item[(b)]
$(2)$ implies $(3)$.
\item[(c)]
If $X'$ is $\k$-projective, then $(3)$ implies $(1)$. 
\end{enumerate}
\end{thm}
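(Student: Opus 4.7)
My plan is to upgrade the classical Rickard--Keller Morita theorem for derived equivalences of $\k$-categories to the colax-functor setting, the systematic tool being Theorem~\ref{comp-pseudofun} applied to the pseudofunctors $\prj$, $\Kb$, $\Mod$, and $\calD$ of Examples~\ref{exm:Mod-D} and~\ref{exm:prj-Kb}. The pointwise equivalences are classical; the real work is in organising the $I$-equivariance data so that everything lives inside the 2-category $\Colax(I,-)$.

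For part (a), start from an equivalence $(F,\ps)\colon \calD(\Mod X)\to \calD(\Mod X')$ in $\Colax(I,\kTri)$. Each $F(i)$ is a triangle equivalence and therefore preserves compact objects, restricting to an equivalence $\Kb(\prj X(i))\simeq \Kb(\prj X'(i))$. Since $\calD(\Mod X)(a)=-\Lox_{X(i)}\ovl{X(a)}$ preserves compactness (the bimodule $\ovl{X(a)}$ is representable on one side), the 2-isomorphisms $\ps(a)$ restrict to the compact subcategories, and the colax axioms are inherited. For part (b), given an equivalence $(F,\ps)\colon \Kb(\prj X)\to \Kb(\prj X')$, define $\calT(i)$ to be the full subcategory of $\Kb(\prj X(i))$ whose objects are the preimages under $F(i)$ of the degree-zero representables $X'(i)(\blank,x)$. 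Each $\calT(i)$ is tilting because both defining properties of Definition~\ref{dfn:tilting-colax}(2) are preserved by triangle equivalences; conjugating the $\ps(a)$ back through $F$ equips $\calT$ with a colax-subfunctor structure, and the restriction of $(F,\ps)$ yields the required equivalence $\calT\simeq X'$ in $\Colax(I,\kCat)$.

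For part (c), assume $X'$ is $\k$-projective. Applying the pseudofunctor $\Colax(I,\Kb\circ\prj)$ of Theorem~\ref{comp-pseudofun} to the given equivalence $\calT\simeq X'$ yields an equivalence $\Kb(\prj \calT)\simeq \Kb(\prj X')$ in $\Colax(I,\kTri)$. The crucial remaining step is to construct an $I$-equivariant equivalence $\Kb(\prj X)\simeq \Kb(\prj \calT)$; composing the two and then using the embedding $\Kb(\prj -)\incl \calD(\Mod -)$ produces the desired equivalence $\calD(\Mod X')\simeq \calD(\Mod X)$. Pointwise this is Keller's theorem applied to the tilting subcategory $\calT(i)\subseteq \Kb(\prj X(i))$ with $\calT(i)\simeq X'(i)$ $\k$-projective, realised as a derived tensor product with a tilting bimodule.

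The main obstacle is exactly this last gluing step: assembling Keller's pointwise equivalences into a single 1-morphism in $\Colax(I,\kTri)$, i.e.\ producing natural 2-isomorphisms that witness compatibility with the structure functors $X(a)$ and $\calT(a)$ and then verifying axioms (a) and (b) of Definition~\ref{dfn:colax-fun-2cat}(4). I expect to handle this either by appealing to a universal property of the tilting equivalence, or, more concretely, by realising the global equivalence as a derived tensor product with a bimodule built $I$-equivariantly from the inclusion $(\si,\ro)\colon \calT\incl \Kb(\prj X)$; Proposition~\ref{precovering-preserved} should play a role in transporting $I$-equivariance through the passage from $\prj$ to $\Kb(\prj -)$.
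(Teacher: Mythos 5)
First, a point of orientation: this paper does not prove Theorem~\ref{mainthm1} at all — it is quoted from the earlier paper \cite{Asa-a} (``The following was our main theorem in \cite{Asa-a}\dots''), so there is no in-text proof to measure your argument against; I can only judge the proposal on its own terms. Your outlines of (a) and (b) follow the expected route: for (a), restriction of the equivalence $\calD(\Mod X)\simeq\calD(\Mod X')$ to compact objects, using that $\blank\Lox_{X(i)}\ovl{X(a)}$ preserves compacts because $\ovl{X(a)}(x,?)=X(j)(?,X(a)x)$ is representable; for (b), transport of the degree-zero representables through a quasi-inverse of $F(i)$. Two routine but real points you should not wave at: in (a) the functor $F(i)$ carries $\Kb(\prj X(i))$ only into the \emph{essential image} of $\Kb(\prj X'(i))$ in $\calD(\Mod X'(i))$, and in (b) the functors $\Kb(\prj X)(a)$ carry $\calT(i)$ into $\calT(j)$ only up to isomorphism; so defining $\calT(a)$, the 2-isomorphisms $\ro(a)$, and the colax structure of $\calT$ involves choices, and one must verify that these choices satisfy the axioms of Definition~\ref{dfn:colax-fun-2cat} and make $(\si,\ro)$ a genuine 1-morphism. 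These are fixable, but the verification is part of the proof.

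The genuine gap is in (c), and you have named it yourself: the entire content of that implication sits inside the sentence beginning ``the crucial remaining step is to construct an $I$-equivariant equivalence,'' for which you list two candidate strategies and carry out neither. This is not a peripheral verification — it is the substantive direction of the theorem, and it is exactly where the hypothesis that $X'$ (hence $\calT$) is $\k$-projective must be used, since Keller's theorem needs a flatness/projectivity hypothesis to produce the pointwise equivalences. The obstruction is that Keller's equivalence $\calD(\Mod X(i))\simeq\calD(\Mod\calT(i))$ is defined only after choices (homotopically projective resolutions, quasi-inverses, a lift of $\calT(i)$ to the complex level), and making these choices independently at each $i\in I_0$ gives no reason for the squares over $a\colon i\to j$ to commute up to \emph{specified} invertible 2-morphisms satisfying axioms (a) and (b) of a 1-morphism. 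What is needed — and what \cite{Asa-a}, following \cite{Asa11}, actually does — is to build the equivalence uniformly from the $I$-equivariant inclusion $(\si,\ro)$, e.g.\ as a restriction $M\mapsto\calD(\Mod X(i))(\si(i)(?),M)$ or as a derived tensor with the bimodule $\Kb(\prj X(i))(\si(i)(?),?)$, so that the 2-morphism data $\ro(a)$ and the colax data of $\calD(\Mod X)$ canonically induce the required 2-isomorphisms, and then to check the coherence axioms explicitly. Until that construction and verification are written down, your argument for (c) is a plan rather than a proof.
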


\section{Derived equivalences of Grothendieck constructions}

First we cite the statement \cite[Corollary 9.2]{Ke1} in the $\k$-category case.

\begin{thm}[Keller]
Let $\calA$ and $\calB$ be $\k$-categories and assume that
$\calA$ is $\k$-flat.
Then the following are equivalent.
\begin{enumerate}
\item
$\calA$ and $\calB$ are derived equivalent.
\item
$\calB$ is equivalent to a tilting subscategory of $\Kb(\prj \calA)$.
\end{enumerate}

\end{thm}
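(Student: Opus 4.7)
The plan is to prove the two implications separately using standard tilting machinery for triangulated categories, being careful to use the $\k$-flatness of $\calA$ at the key technical step. Throughout, I identify $\calA$ with its image in $\Kb(\prj \calA)$ under the Yoneda embedding $Y_\calA$ and similarly for $\calB$.

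For the direction $(2)\Rightarrow(1)$, suppose $\calB$ is equivalent to a tilting subcategory $\calT$ of $\Kb(\prj \calA)$. First I would view $\calT$ as a full $\k$-subcategory of $\calD(\Mod \calA)$ and construct a triangle functor $\Phi\colon \calD(\Mod \calT) \to \calD(\Mod \calA)$ by deriving the tensor product with the $\calT$-$\calA$-bimodule $M \mapsto M|_\calT$, i.e.\ $\Phi:=\blank\Lox_\calT \ovl{\iota}$, where $\iota\colon \calT \hookrightarrow \calD(\Mod \calA)$ is the inclusion. By construction $\Phi$ sends the representable $\calT(\blank, T)$ to $T$. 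The first tilting condition (Hom-vanishing $\Hom(U, V[n]) = 0$ for $n \neq 0$) says $\Phi$ is fully faithful on the representable modules; since these are compact generators of $\calD(\Mod \calT)$ and $\Phi$ commutes with coproducts, $\Phi$ is fully faithful. The second tilting condition (the thick closure of $\calT$ is all of $\Kb(\prj \calA)$) together with the compact generation of $\calD(\Mod \calA)$ by $\Kb(\prj \calA)$ shows $\Phi$ is essentially surjective. Combining with the equivalence $\calB \simeq \calT$ yields $\calD(\Mod \calB) \simeq \calD(\Mod \calA)$.

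For $(1)\Rightarrow(2)$, suppose $F\colon \calD(\Mod \calB) \to \calD(\Mod \calA)$ is a triangle equivalence. Let $\calT$ be the full $\k$-subcategory of $\calD(\Mod \calA)$ on the objects $\{F(\calB(\blank, b)) : b \in \calB_0\}$. Since $F$ is an equivalence and the Yoneda images $\calB(\blank, b)$ are compact in $\calD(\Mod \calB)$, each $F(\calB(\blank, b))$ is compact in $\calD(\Mod \calA)$. Here I use the \textbf{crucial} fact that, because $\calA$ is $\k$-flat, the subcategory of compact objects of $\calD(\Mod \calA)$ coincides up to equivalence with $\Kb(\prj \calA)$; so (after replacing each $F(\calB(\blank, b))$ by an isomorphic object) we may assume $\calT \subset \Kb(\prj \calA)$. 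The Hom-vanishing condition transfers from the projectivity of the $\calB(\blank, b)$ along $F$, and the thick-closure generation condition transfers from the fact that $\calB$ generates $\Kb(\prj \calB)$ as a thick subcategory, again along $F$. Finally, $F$ restricts to an equivalence $\calB \to \calT$ on the full subcategories of the representable/$\calT$-objects.

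The main obstacle is the identification, under $\k$-flatness, of the compact objects of $\calD(\Mod \calA)$ with $\Kb(\prj \calA)$, and more generally the verification that the derived tensor product in the $(2)\Rightarrow(1)$ construction is an equivalence. Without $\k$-flatness, the derived tensor product with $\ovl{\iota}$ need not preserve the Hom-vanishing that passes from the tilting condition to full faithfulness of $\Phi$, and one must pass to a dg-enhancement to make this rigorous. This is precisely why Keller's original proof works in the more general setting of dg-categories, where flatness over $\k$ is the minimal hypothesis that makes the derived tensor product well-behaved.
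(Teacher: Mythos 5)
The paper does not prove this statement at all: it is quoted verbatim as \cite[Corollary 9.2]{Ke1} (``First we cite the statement\dots''), so there is no internal proof to compare yours against. Judged on its own, your sketch follows the standard Rickard--Keller route and the overall architecture is right: in $(2)\Rightarrow(1)$, full faithfulness on the compact generators via the Hom-vanishing condition plus preservation of coproducts, and essential surjectivity via the thick-closure condition; in $(1)\Rightarrow(2)$, transport of the images of the representables and identification of compacts with $\Kb(\prj\calA)$.

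Two points deserve flagging. The genuine gap is the construction of $\Phi=\blank\Lox_{\calT}\ovl{\iota}$: the ``bimodule'' $\ovl{\iota}$ does not exist as an ordinary $\calT$-$\calA$-bimodule (or complex thereof), because morphisms in $\calT$ are homotopy classes of chain maps between complexes, so any choice of representatives acts on $(a,T)\mapsto T(a)$ only up to homotopy. One must replace $\calT$ by the DG category $\widetilde{\calT}$ with the same objects and Hom \emph{complexes}, observe that the tilting Hom-vanishing makes $\widetilde{\calT}$ quasi-equivalent to $\calT=H^0(\widetilde{\calT})$, and derive the tensor product there --- this is exactly where $\k$-flatness of $\calA$ is consumed (it guarantees the Hom complexes are $\k$-flat, so the derived tensor is computed correctly). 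You acknowledge the need for a DG enhancement in your final paragraph, but as written $\Phi$ is not defined, so the $(2)\Rightarrow(1)$ direction is a correct plan rather than a proof. Secondly, a minor misattribution: the identification of the compact objects of $\calD(\Mod\calA)$ with $\Kb(\prj\calA)$ in your $(1)\Rightarrow(2)$ argument holds without any flatness hypothesis; the flatness is not needed in that direction, only in the converse.
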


The following is our main result in this paper.

\begin{thm}
\label{mainthm2}
Let $X, X' \in \Colax(I, \kCat)$.
Assume that $X$ is $\k$-flat and that there exists a tilting colax functor $\calT$ for $X$
such that $\calT$ and $X'$ are equivalent in $\Colax(I, \kCat)$
$($the condition $(3)$ in {\em Theorem \ref{mainthm1}}$)$.
Then $\Gr(X)$ and $\Gr(X')$ are derived equivalent.
\end{thm}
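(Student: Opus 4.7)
The strategy is to reduce to Keller's theorem by exhibiting $\Gr(X')$ as equivalent to a tilting subcategory of $\Kb(\prj \Gr(X))$. I first note that $\Gr(X)$ inherits $\k$-flatness from $X$, since each hom-space $\Gr(X)({}_ix,{}_jy) = \bigoplus_{a \in I(i,j)} X(j)(X(a)x, y)$ is a direct sum of flat $\k$-modules, so Keller's theorem applies to $\Gr(X)$. Writing $(\sigma, \rho) \colon \calT \hookrightarrow \Kb(\prj X)$ for the $I$-equivariant inclusion provided by the tilting data and $(P_X, \phi_X) \colon X \to \Delta(\Gr(X))$ for the canonical $I$-covering, I form the composite 1-morphism
\[
(F, \psi) := \Kb(\prj(P_X, \phi_X)) \circ (\sigma, \rho) \colon \calT \longrightarrow \Delta(\Kb(\prj \Gr(X)))
\]
in $\Colax(I, \kCat)$, and define $\tilde{\calT}$ to be the full subcategory of $\Kb(\prj \Gr(X))$ on the objects $\{F(i)(t) : i \in I_0,\ t \in \calT(i)_0\}$.

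Then $(F, \psi)$ corestricts to a dense 1-morphism $\calT \to \Delta(\tilde{\calT})$. Combining the $I$-precovering property of $\Kb(\prj(P_X, \phi_X))$ guaranteed by Proposition \ref{precovering-preserved} with the 2-isomorphisms $\rho(a)$ and the fullness of $\calT(j) \subseteq \Kb(\prj X(j))$ shows that this corestriction is in fact an $I$-covering, whence Corollary \ref{covering-Gr} yields an equivalence $\Gr(\calT) \isoto \tilde{\calT}$ of $\k$-categories.

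The technical heart is to verify that $\tilde{\calT}$ is a tilting subcategory of $\Kb(\prj \Gr(X))$. The Hom-vanishing for $u \in \calT(i)_0$, $v \in \calT(j)_0$ and $n \neq 0$ drops out of the same $I$-precovering decomposition: each summand reduces via $\rho(a)$ to $\Kb(\prj X(j))(\sigma(j)\calT(a)u,\ (\sigma(j)v)[n])$, which vanishes because $\calT(j)$ is tilting in $\Kb(\prj X(j))$. For generation, the class of objects of $\Kb(\prj X(i))$ whose image under the triangle functor $\Kb(\prj P_X(i))$ lies in $\thick(\tilde{\calT})$ is thick and contains $\calT(i)$, hence equals all of $\Kb(\prj X(i))$; by \eqref{prj-representables} this puts each representable $\Gr(X)(-, {}_ix) \cong \Kb(\prj P_X(i))(X(i)(-, x))$ in $\thick(\tilde{\calT})$. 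As $i$ and $x$ vary these exhaust all representables of $\Gr(X)$, which generate $\Kb(\prj \Gr(X))$. I expect this generation step to be the main obstacle, since it requires recognizing that pushing forward the representable generators at each $X(i)$ along $\Kb(\prj P_X(i))$ recovers exactly the representable generators of $\Gr(X)$.

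Finally, Keller's theorem gives a derived equivalence between $\Gr(X)$ and $\Gr(\calT)$ (via the tilting subcategory $\tilde{\calT} \simeq \Gr(\calT)$); applying the $2$-functor $\Gr$ to the hypothesized equivalence $\calT \simeq X'$ in $\Colax(I, \kCat)$ produces an equivalence $\Gr(\calT) \simeq \Gr(X')$ of $\k$-categories, and chaining these yields the desired derived equivalence between $\Gr(X)$ and $\Gr(X')$.
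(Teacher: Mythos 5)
Your proposal is correct and follows essentially the same route as the paper: the subcategory $\tilde{\calT}$ is exactly the paper's $\calT'$, the tilting verification uses the same $I$-precovering decomposition together with $\rho(a)$ and the generation-by-representables argument via \eqref{prj-representables}, and the identification $\tilde{\calT}\simeq\Gr(\calT)$ via Corollary \ref{covering-Gr} followed by $\Gr(\calT)\simeq\Gr(X')$ is the paper's concluding step. No gaps to report.
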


\begin{proof}
Note that $\Gr(X)$ is also $\k$-flat by definition of $\Gr(X)$.
Let $\calT$ be a tilting colax subfunctor of $\Kb(\prj X)$
with an $I$-equivariant inclusion
$(\si, \ro)\colon \calT \incl \Kb(\prj X)$.
Put $(P, \ph):= (P_X, \ph_X)$ for short.
Let $\calT'$ be the full subcategory of $\Kb(\prj \Gr(X))$
consisting of the objects $\Kb(\prj P(i))(U)$ with $i \in I_0$ and $U \in \calT(i)_0$.
Then $\calT'$ is a tilting subcategory of $\Kb(\prj\Gr(X))$.
Indeed, let $L, M \in \calT'_0$ and $0 \ne p \in \bbZ$.
Then $L = \Kb(\prj P(i))(U)$ and $M = \Kb(\prj P(j))(V)$
for some $i, j \in I_0$ and some $U \in \calT(i)_0$, $V \in \calT(j)_0$.
Since
$$
\Kb(\prj(P, \ph)) \colon \Kb(\prj X) \to \De(\Kb(\prj\Gr(X)))
$$
is an $I$-precovering by Proposition \ref{precovering-preserved},
we have
$$
\begin{aligned}
\Kb(\prj\Gr(X))(L, M[p]) &\iso
\Kb(\prj\Gr(X))(\Kb(\prj(P,\ph))(U), \Kb(\prj(P,\ph))(V[p]))\\
&\iso
\Ds_{a\in I(i,j)}\Kb(\prj X(j))(\Kb(\prj X)(a)(U), V[p])\\
&\overset{\rm (a)}{\iso}
\Ds_{a\in I(i,j)}\Kb(\prj X(j))(\calT(a)U, V[p]) \overset{\rm (b)}{=} 0,
\end{aligned}
$$
where the isomorphism (a) follows using the natural isomorphism $\ro(a)$:
$$
\xymatrix{
\calT(i) & \Kb(\prj X(i))\\
\calT(i) & \Kb(\prj X(j))
\ar@{^{(}->}"1,1";"1,2"
\ar@{^{(}->}"2,1";"2,2"
\ar_{\calT(a)}"1,1";"2,1"
\ar^{\Kb(\prj X)(a)}"1,2";"2,2"
\ar@{=>}_{\ro(a)}^{\iso}"1,2";"2,1"
\save "1,1"+<-0.9cm,0.1cm>*\txt{$U\in$}  \restore
}
$$
and the equality (b) follows by assumption from the fact that $\calT(a)U, V \in \calT(j)$.
Now for a triangulated category $\calU$ and a class of objects $\calV$ in $\calU$
we denote by $\thick \calV$ the smallest thick subcategory of $\calU$ containing $\calV$.
Then for each $i \in I_0$ and $x \in X(i)$
we have 
$\Kb(\prj P(i))(X(i)(\blank, x)) \iso (\prj P(i))(X(i)(\blank, x)) \iso \Gr(X)(\blank, P(i)(x))
= \Gr(X)(\blank, {}_ix)$
by the formula \eqref{prj-representables}, and hence
$$
\begin{aligned}
\Gr(X)(\blank, {}_ix) &\iso
\Kb(\prj P(i))(X(i)(\blank, x))\\
&\in \Kb(\prj P(i))(\thick \calT(i))\\
&\subseteq \thick\{\Kb(\prj P(i))(U) \mid U \in \calT(i)\}\\
&\subseteq \thick \calT'.
\end{aligned}
$$
Therefore $\thick \calT' = \Kb(\prj \Gr(X))$, and hence
$\calT'$ is a tilting subcategory of $\Kb(\prj \Gr(X))$, as desired.
Hence $\Gr(X)$ and $\calT'$ are derived equivalent because $\Gr(X)$
is $\k$-flat.
Let $(F, \ps)$ be the restriction of $\Kb(\prj (P, \ph))$ to $\calT$.
Then $(F, \ps) \colon \calT \to \De(\calT')$ is a dense functor and an $I$-precovering,
thus it is an $I$-covering, which shows that
$\calT' \simeq \Gr(\calT)$ by Corollary \ref{covering-Gr}.
Since $\calT$ and $X'$ are equivalent in $\Colax(I, \kCat)$, we have
$\Gr(\calT) \simeq \Gr(X')$.
As a consequence, $\Gr(X)$ and $\Gr(X')$ are derived equivalent.
\end{proof}

\begin{cor}
\label{der-eq-Gr}
Let $X, X' \in \Colax(I, \kCat)$.
If $X$ and $X'$ are derived equivalent, then
so are $\Gr(X)$ and $\Gr(X')$.
\end{cor}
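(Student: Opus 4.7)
The plan is to obtain Corollary \ref{der-eq-Gr} as a direct consequence of Theorem \ref{mainthm1} followed by Theorem \ref{mainthm2}; no new construction is required, only a linking of these two already-established results.

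First, assuming that $X$ and $X'$ are derived equivalent — that is, condition (1) of Theorem \ref{mainthm1} holds — I would invoke parts (a) and (b) of that theorem in succession. The chain (1) $\Rightarrow$ (2) $\Rightarrow$ (3) produces a tilting colax functor $\calT$ for $X$ such that $\calT$ and $X'$ are equivalent in $\Colax(I, \kCat)$, which is condition (3) of Theorem \ref{mainthm1} and precisely the hypothesis needed to feed into Theorem \ref{mainthm2}.

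Next I would apply Theorem \ref{mainthm2} directly to this tilting colax functor: with $X$ being $\k$-flat, the theorem yields that $\Gr(X)$ and $\Gr(X')$ are derived equivalent, which is the desired conclusion.

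The only real subtlety, and therefore the main obstacle, is the $\k$-flatness hypothesis on $X$ demanded by Theorem \ref{mainthm2} but not spelled out in the corollary. In the context where derived equivalences are typically considered — namely with $\k$ a field, as emphasized in the introduction — this hypothesis is automatic, since then every object of $\Colax(I, \kCat)$ is $\k$-flat. I would therefore either make the $\k$-flatness of $X$ an explicit standing assumption at the start of the proof, or remark that the corollary is meant for this setting; beyond this bookkeeping, the proof is simply the two-step reduction described above.
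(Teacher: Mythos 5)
Your proof is correct and is essentially identical to the paper's own argument: the paper likewise deduces condition (3) of Theorem \ref{mainthm1} from condition (1) via parts (a) and (b), and then applies Theorem \ref{mainthm2}. Your remark about the $\k$-flatness hypothesis is a fair observation — the paper's proof silently omits it, relying on the standing convention from the introduction that $\k$ is a field whenever derived equivalences are discussed.
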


\begin{proof}
Assume that $X$ and $X'$ are derived equivalent, namely that
the condition (1) in Theorem \ref{mainthm1} is satisfied.
Then the condition (3) in Theorem \ref{mainthm1} holds
by Theorem \ref{mainthm1} (a) and (b).
Hence $\Gr(X)$ and $\Gr(X')$ are derived equivalent
by the theorem above.
\end{proof}

The following is easy to verify.

\begin{lem}
Let $C, C'$ be in $\kCat$.
If $C$ and $C'$ are derived equivalent,
then so are $\De(C)$ and $\De(C')$.\quad\qed
\end{lem}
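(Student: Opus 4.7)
My goal, unwinding the definition of derived equivalence in $\Colax(I, \kCat)$, is to produce an equivalence between $\calD(\Mod \De(C))$ and $\calD(\Mod \De(C'))$ in the 2-category $\Colax(I, \kTri)$. The hypothesis furnishes a triangle equivalence $F \colon \calD(\Mod C) \to \calD(\Mod C')$, and the strategy will be to smear $F$ constantly over the index category $I$.

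Write $V := \calD \circ \Mod \colon \kCat \to \kTri$, which is a pseudofunctor by composing the two pseudofunctors of Example \ref{exm:Mod-D}. By \thmref{comp-pseudofun} the composite $V \circ \De(C)$ is a colax functor $I \to \kTri$; since $\De(C)$ is a strict 2-functor (sending every morphism and every composable pair to $\id_C$), its own unit and composition 2-cells are identities, so the unit of $V \circ \De(C)$ at every $i \in I_0$ reduces to $\et^V_C \colon V(\id_C) \To \id_{V(C)}$, and its composition 2-cell at every composable pair reduces to $\th^V_{\id_C, \id_C}$; analogous statements hold for $C'$. With this in hand I define $(\widetilde F, \widetilde\ps) \colon \calD(\Mod \De(C)) \to \calD(\Mod \De(C'))$ by
$$
\widetilde F(i) := F \quad (i \in I_0), \qquad \widetilde\ps(a) := (F \et^V_C)^{-1} \circ (\et^V_{C'} F) \quad (a \in I_1),
$$
so that each $\widetilde\ps(a) \colon V(\id_{C'}) F \To F V(\id_C)$ is a 2-isomorphism and each $\widetilde F(i) = F$ is a triangle equivalence. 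Once $(\widetilde F, \widetilde\ps)$ is known to be a well-defined 1-morphism in $\Colax(I, \kTri)$, \lemref{colax-eq} will immediately yield that it is an equivalence, and the lemma will follow.

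The real content is therefore the verification of the two 1-morphism axioms of Definition \ref{dfn:colax-fun-2cat}(4). Axiom (a) is immediate: substituting $\widetilde\ps(\id_i) = (F\et^V_C)^{-1} \circ (\et^V_{C'} F)$ into the required identity and cancelling $(F\et^V_C)^{-1}$ against $F\et^V_C$ gives back $\et^V_{C'} F$. Axiom (b) is the only step requiring care. Here I would invoke the pseudofunctor unit axiom (ii) of Definition \ref{dfn:colax-fun-2cat}(1) applied to $V$ at $(\id_C,\id_C)$, which forces $\th^V_{\id_C,\id_C}$ to be the inverse of both $\et^V_C V(\id_C)$ and $V(\id_C)\et^V_C$ (equal to one another by the naturality of $\et^V_C$ whiskered with itself), and similarly for $C'$. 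Substituting these relations converts every $\th^V$ appearing in axiom (b) into an inverted whiskering of $\et^V$, whereupon the hexagon collapses to a routine cancellation driven by the naturality of $\et^V_C$ and $\et^V_{C'}$ against $F$. The principal obstacle is this bookkeeping, but it presents no structural surprise beyond the pseudofunctor coherence of $V$ at the identity.
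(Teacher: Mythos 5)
Your proof is correct. The paper offers no argument for this lemma (it is stated with an immediate \qed as ``easy to verify''), and your constant extension $\widetilde F(i)=F$, $\widetilde\ps(a)=(F\et^V_C)^{-1}\circ(\et^V_{C'}F)$ is exactly the natural way to supply the missing verification: axiom (a) cancels as you say, axiom (b) reduces via the unit coherence $\th^V_{\id,\id}=(\et^V V(\id))^{-1}=(V(\id)\et^V)^{-1}$ and the interchange law to an identity, property (0) is vacuous since $I$ has only identity 2-morphisms, and Lemma \ref{colax-eq} then upgrades $(\widetilde F,\widetilde\ps)$ to an equivalence in $\Colax(I,\kTri)$.
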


Corollary \ref{der-eq-Gr} together with the lemma above 
and Example \ref{exm:Gr} gives us
a unified proof of the following fact.

\begin{thm}\label{thm:unified-proof}
Assume that $\k$ is a field and that $\k$-algebras $A$ and $A'$ are derived equivalent.
Then the following pairs are derived equivalent as well:
\begin{enumerate}
\item
path-categories $AQ$ and $A'Q$ for any quiver $Q$;
\item
incidence categories $AS$ and $A'S$ for any poset $S$; and
\item
monoid algebras $AG$ and $A'G$ for any monoid $G$. 
\end{enumerate}
\qed
\end{thm}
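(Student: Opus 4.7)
The plan is simply to assemble the three ingredients that the paper has just put in place: Example \ref{exm:Gr}, the preceding lemma on $\De$, and Corollary \ref{der-eq-Gr}. In each of the three cases one chooses an appropriate small category $I$ so that $\Gr(\De(A))$ is identified with the desired construction on $A$, and then transports derived equivalence of $A$ with $A'$ along the chain $A \rightsquigarrow \De(A) \rightsquigarrow \Gr(\De(A))$.

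More concretely, regard $A$ and $A'$ as $\k$-categories with a single object; then by hypothesis they are derived equivalent in $\kCat$. For (1) take $I$ to be the free category on the quiver $Q$; for (2) take $I = S$ (a poset, viewed as a thin category); for (3) take $I = G$ (the one-object category associated with the monoid). In each case Example \ref{exm:Gr} gives canonical isomorphisms
\[
\Gr(\De(A)) \iso AQ,\quad \Gr(\De(A)) \iso AS,\quad \Gr(\De(A)) \iso AG,
\]
and the analogous isomorphisms with $A$ replaced by $A'$.

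Now apply the lemma just before the theorem: since $A$ and $A'$ are derived equivalent in $\kCat$, the diagonal colax functors $\De(A)$ and $\De(A')$ are derived equivalent in $\Colax(I, \kCat)$. Invoking Corollary \ref{der-eq-Gr} to this pair yields that $\Gr(\De(A))$ and $\Gr(\De(A'))$ are derived equivalent as $\k$-categories, and via the identifications above this is exactly the conclusion in each of the three cases.

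There is essentially no obstacle: all the nontrivial work has been done in establishing Theorem \ref{mainthm2} and its corollary, and in verifying the isomorphisms of Example \ref{exm:Gr}. The only routine matter to double-check is that the hypotheses of Corollary \ref{der-eq-Gr} are met — namely that $\De(A)$ and $\De(A')$ lie in $\Colax(I, \kCat)$ for each choice of $I$, which is immediate from the definition of $\De$ as a 2-functor $\kCat \to \Colax(I, \kCat)$.
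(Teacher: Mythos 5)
Your proposal is correct and follows exactly the route the paper takes: the paper derives this theorem by combining Example \ref{exm:Gr} (identifying $\Gr(\De(A))$ with $AQ$, $AS$, $AG$ for the appropriate choice of $I$), the lemma that derived equivalence of $C$ and $C'$ passes to $\De(C)$ and $\De(C')$, and Corollary \ref{der-eq-Gr}. Nothing is missing.
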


\begin{exm}
\label{exm:gluing}
Assume that $\k$ is a field.
Let $n$ be a natural number $\ge 3$, and $I$ the free category defined by the quiver $Q$:
$2 \ya{a_2} 3 \ya{a_3} \cdots \ya{a_{n-1}} n$.
Define functors $X, X' \colon I \to \kCat$ as follows.

For each $i \in I_0 =\{2, \dots, n\}$ let $X(i)$ be the $\k$-category defined by the quiver
$$
\xymatrix{
1 & 2 & 3 & \cdots & i
\ar@/^/^{\al_1}"1,1";"1,2"
\ar@/^/^{\al_2}"1,2";"1,3"
\ar@/^/^{\al_3}"1,3";"1,4"
\ar@/^/^{\al_{i-1}}"1,4";"1,5"
\ar@/^/^{\be_1}"1,2";"1,1"
\ar@/^/^{\be_2}"1,3";"1,2"
\ar@/^/^{\be_3}"1,4";"1,3"
\ar@/^/^{\be_{i-1}}"1,5";"1,4"
}
$$
with relations
$\al_{j+1}\al_{j}=0$, $\be_j\be_{j+1}=0$, $\al_j\be_j = \be_{j+1}\al_{j+1}$ for all $j = 1,\dots, i-1$
and $\al_1\be_1\al_1 = 0$, $\be_{i-1}\al_{i-1}\be_{i-1}=0$.
For each $a_i\colon i \to i+1$ in $I_1$ let $X(a_i) \colon X(i) \to X(i+1)$ be the
inclusion functor. This defines a functor $X\colon I \to \kCat$.

For each $i \in I_0 =\{2, \dots, n\}$ let $X'(i)$ be the $\k$-category defined by the quiver
$$
\xymatrix{
1 & 2 & 3 & \cdots & i
\ar^{\ga_1}"1,1";"1,2"
\ar^{\ga_2}"1,2";"1,3"
\ar^{\ga_3}"1,3";"1,4"
\ar^{\ga_{i-1}}"1,4";"1,5"
\ar@/^15pt/^{\ga_i}"1,5";"1,1"
}
$$
with relations
$\ga_{j+i}\cdots\ga_{j+1}\ga_{j} =0$ for all $j \in \bbZ/i\bbZ$.
For each $a_i\colon i \to i+1$ in $I_1$ let $X(a_i) \colon X(i) \to X(i+1)$ be the
functor defined by the correspondence $1 \mapsto 1$, $j \mapsto j+1$ and $\al_1 \mapsto \al_2\al_1$, $\al_j \mapsto \al_{j+1}$ for all $j=2, \dots i$.
This defines a functor $X'\colon I \to \kCat$.

As is explained in \cite{Asa97} we have a tilting spectroid $\calT(i)$ for $X(i)$
that is a full subcategory of $\Kb(\prj X(i))$ consisting of the following $i$ objects
$$
\begin{aligned}
T(i)_1&:= (\udl{P_1}),\\
T(i)_2&:= (\udl{P_2}\ya{P(\al_2)}P_3\ya{P(\al_3)}\cdots \ya{P(\al_{i-1})}P_i),\\
T(i)_3&:= (\udl{P_2}\ya{P(\al_2)}P_3\ya{P(\al_3)}\cdots \ya{P(\al_{i-2})}P_{i-1}),\\
&\vdots\\
T(i)_i&:=(\udl{P_2}),
\end{aligned}
$$
where $P_j:= X(i)(\blank, j) \in \prj X(i)$ for all $j \in X(i)_0$, $P(\al):= X(i)(\blank, \al)$
for all $\al \in X(i)_1$ and the underline indicates the place of degree zero.
Again by \cite{Asa97}, $\calT(i)$ is presented by the same quiver with relations
as $X'(i)$ and we have an isomorphism
$F(i)\colon X'(i) \to \calT(i)$ sending $j$ to $T(i)_j$ for all $j=1,\dots, i$ and
$\ga_j$ to a morphism $\de(i)_j\colon T(i)_j \to T(i)_{j+1}$ for all $j \in \bbZ/i\bbZ$,
where $\de(i)_1:=(\udl{P(\al_1)})$, $\de(i)_j:=(\udl{\id_{P_2}}, \dots, \id_{P_{i-j+1}},0)$
for all $j=2,\dots,i-1$ and $\de(i)_{i}:=(\udl{P(\be_1}))$.
Thus $\calT(i)$ gives a derived equivalence between $X(i)$ and $X'(i)$.

For each $a_i \colon i \to i+1$ in $I_1$ define a functor $\calT(a_i) \colon \calT(i) \to \calT(i+1)$
by the correspondence $T(i)_1 \mapsto T(i+1)_1$,
$T(i)_j \mapsto T(i+1)_{j+1}$ and $\de(i)_1 \mapsto \de(i+1)_2\de(i+1)_1$, $\de(i)_j \mapsto \de(i+1)_{j+1}$ for all $j =2,\dots, i$.
This defines a functor $\calT \colon I \to \kCat$.
Then we have a strict commutative diagram
$$
\xymatrix{
X'(i) & \calT(i)\\
X'(i+1) & \calT(i+1)
\ar^{F(i)}"1,1";"1,2"
\ar_{F(i+1)}"2,1";"2,2"
\ar_{X'(a_i)}"1,1";"2,1"
\ar^{\calT(a_i)}"1,2";"2,2"
}
$$
in $\kCat$ for all $i \in I_0$, which shows that
$X'$ and $\calT$ are equivalent in $\Colax(I, \kCat)$.
Finally by definition of $\calT(a_i)$'s it is easy to see that we have an $I$-equivariant inclusion
$(\si, \ro)\colon \calT \incl \Kb(\prj X)$:
$$
\xymatrix{
\calT(i) & \Kb(\prj X(i))\\
\calT(i+1) & \Kb(\prj X(i+1)).
\ar@{^{(}->}^-{\si(i)}"1,1";"1,2"
\ar@{^{(}->}_-{\si(i+1)}"2,1";"2,2"
\ar_{\calT(a_i)}"1,1";"2,1"
\ar^{\Kb(\prj X(a_i))}"1,2";"2,2"
\ar@{=>}_{\rho(a_i)}^{\iso}"1,2";"2,1"
}
$$
Hence by Theorem \ref{mainthm2} we can glue derived equivalences between
$X(i)$'s and $X'(i)$'s together to have a derived equivalence
between $\Gr(X)$ and $\Gr(X')$.
For example when $n = 5$, these are presented by the following quivers
$$
\Gr(X)=
\vcenter{
\xymatrix{
1 & 2\\
1 & 2 & 3\\
1 & 2 & 3 & 4\\
1 & 2 & 3 & 4 & 5
\ar@/^/^{\al_1}"1,1";"1,2"
\ar@/^/^{\be_1}"1,2";"1,1"
\ar@/^/^{\al_1}"2,1";"2,2"
\ar@/^/^{\al_2}"2,2";"2,3"
\ar@/^/^{\be_1}"2,2";"2,1"
\ar@/^/^{\be_2}"2,3";"2,2"
\ar@/^/^{\al_1}"3,1";"3,2"
\ar@/^/^{\al_2}"3,2";"3,3"
\ar@/^/^{\al_3}"3,3";"3,4"
\ar@/^/^{\be_1}"3,2";"3,1"
\ar@/^/^{\be_2}"3,3";"3,2"
\ar@/^/^{\be_3}"3,4";"3,3"
\ar@/^/^{\al_1}"4,1";"4,2"
\ar@/^/^{\al_2}"4,2";"4,3"
\ar@/^/^{\al_3}"4,3";"4,4"
\ar@/^/^{\al_{4}}"4,4";"4,5"
\ar@/^/^{\be_1}"4,2";"4,1"
\ar@/^/^{\be_2}"4,3";"4,2"
\ar@/^/^{\be_3}"4,4";"4,3"
\ar@/^/^{\be_{4}}"4,5";"4,4"
\ar"1,1";"2,1"
\ar"1,2";"2,2"
\ar"2,1";"3,1"
\ar"2,2";"3,2"
\ar"2,3";"3,3"
\ar"3,1";"4,1"
\ar"3,2";"4,2"
\ar"3,3";"4,3"
\ar"3,4";"4,4"
}
},
\quad
\Gr(X') =
\vcenter{
\xymatrix{
1 & 2\\
1 & 2 & 3\\
1 & 2 & 3 & 4\\
1 & 2 & 3 & 4 & 5
\ar^{\ga_1}"1,1";"1,2"
\ar@/^15pt/^{\ga_2}"1,2";"1,1"
\ar^{\ga_1}"2,1";"2,2"
\ar^{\ga_2}"2,2";"2,3"
\ar@/^15pt/^{\ga_3}"2,3";"2,1"
\ar^{\ga_1}"3,1";"3,2"
\ar^{\ga_2}"3,2";"3,3"
\ar^{\ga_3}"3,3";"3,4"
\ar@/^15pt/^(.7){\ga_4}"3,4";"3,1"
\ar^{\ga_1}"4,1";"4,2"
\ar^{\ga_2}"4,2";"4,3"
\ar^{\ga_3}"4,3";"4,4"
\ar^{\ga_{4}}"4,4";"4,5"
\ar@/^15pt/^{\ga_5}"4,5";"4,1"
\ar"1,1";"2,1"
\ar"1,2";"2,3"
\ar"2,1";"3,1"
\ar"2,2";"3,3"
\ar"2,3";"3,4"
\ar"3,1";"4,1"
\ar"3,2";"4,3"
\ar"3,3";"4,4"
\ar"3,4";"4,5"
}
}
$$
with suitable relations as calculated in \cite{Asa-Kim}.
Note that if we start with $I$ presented by the same quiver $Q$ as above with
relations $a_{i+1}a_i=0$ for all $i=2,\dots, n-2$, then
both $\Gr(X)$ and $\Gr(X')$ are presented by the same quivers
with relations consisting of the same relations as before together with
the additional relations that the vertical paths of length 2 are zero, respectively.
\end{exm}

\section{The composite of colax functors and pseudofunctors}

In this section we prove Theorem \ref{comp-pseudofun}.
Throughout this section $\bfB, \bfC$ and $\bfD$ are $2$-categories.

\begin{ntn}
When we denote a colax functor by a letter $X$ the 1-st (resp.\ 2-nd and 3-rd) entry of $X$
is denoted by $X_{012}:=(X_0, X_1, X_2)$ (resp.\ $\et^X$ and $\th^X$), thus we set
$X = (X_{012}, \et^X, \th^X)$, and sometimes we simply write $X$ for $X_{d}$ for all $d = 0,1,2$
if this seems to make no confusion.
\end{ntn}

\subsection{Correspondences on cells}
\label{dfn-corr}

\begin{lem}
Let  $X\colon \bfB \to \bfC$ and $V \colon \bfC \to \bfD$ be colax functors.
We define the composite $VX \colon \bfB \to \bfD$ as follows.
\begin{itemize}
\item $(VX)_d:= V_d X_d \colon \bfB_d \ya{X_d} \bfC_d \ya{V_d} \bfD_d$ for all $d = 0,1,2$.
\item $\et^{VX}_i:= \et^V_{X(i)} \circ V\et^X_i
\colon
\xymatrix{
VX(\id_i) &V(\id_{X(i)}) &\id_{(VX)(i)}
\ar@{=>}^{V\et^X_i}"1,1";"1,2"
\ar@{=>}^{\et^V_{X(i)}}"1,2";"1,3"
}$
for all $i \in \bfB_0$.
\item $\th^{VX}_{b,a}:= \th^V_{X(b), X(a)}\circ V\th^X_{b,a} \colon
\xymatrix@C=35pt{VX(ba) & V(X(b)\circ X(a)) &VX(b) \circ VX(a)
\ar@{=>}^-{V\th^X_{b,a}}"1,1";"1,2"
\ar@{=>}^{\th^V_{X(b), X(a)}}"1,2";"1,3"
}
$
for all $(b,a) \in \com(\bfB)$.
\end{itemize}
Namely, $VX:=((V_0X_0, V_1X_1, V_2X_2), (\et^V_{X(i)} \circ V\et^X_i)_{i\in \bfB_0},
(\th^V_{X(b), X(a)}\circ V\th^X_{b,a})_{(b,a)\in \com(\bfB)})$.
Then the composite $\Colax(\bfB, V)(X):= VX \colon \bfB \to \bfD$ is again a colax functor.
\end{lem}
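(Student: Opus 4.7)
The plan is to verify, one by one, the four axioms (i)--(iv) of Definition~\ref{dfn:colax-fun-2cat} for the triple
$(VX, \eta^{VX}, \theta^{VX})$ constructed above. Axiom (i) is immediate: the assignment $(V_1 X_1, V_2 X_2)$ on each hom-category factors as the composite of functors
$\bfB(i,j) \xrightarrow{(X_1,X_2)} \bfC(X_0 i, X_0 j) \xrightarrow{(V_1,V_2)} \bfD(V_0X_0 i, V_0 X_0 j)$, each of which is a functor by axiom (i) for $X$ and for $V$; composing functors yields a functor.

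For axiom (ii), I would paste three pieces to establish each unit triangle for $VX$. Consider for example the right unit at $a\colon i\to j$. Applying $V$ (as a quiver morphism preserving 2-cells) to the right unit triangle of $X$ gives a diagram relating $V\theta^X_{a,\id_i}$, the identity, and $VX(a)V\eta^X_i$. Pasting this with the right unit triangle of $V$ at the 1-morphism $X(a)$ supplies the factor $\eta^V_{X(i)}$. The remaining mismatch is bridged by the naturality axiom (iv) of $V$ for the 2-morphism $\eta^X_i\colon X(\id_i) \To \id_{X(i)}$, which lets $\theta^V$ slide past $V\eta^X_i$. Collapsing the pasted diagram yields precisely the right unit triangle for $VX$. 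The left unit triangle is symmetric.

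Axiom (iii) is the main bookkeeping step. Given $i \xrightarrow{a} j \xrightarrow{b} k \xrightarrow{c} l$ in $\bfB$, I need a pentagon for $\theta^{VX}$. My plan is to paste together: (1) the image under $V$ of the pentagon of $X$ for $(a,b,c)$, which yields the identity $V\theta^X_{cb,a} \circ V\theta^X_{c,b}X(a) = V\theta^X_{c,ba} \circ VX(c)V\theta^X_{b,a}$ up to reinterpretation of whiskerings; (2) the pentagon of $V$ for the triple $(X(a), X(b), X(c))$ in $\bfC$; and (3) two instances of axiom (iv) for $V$, applied to the 2-morphisms $\theta^X_{c,b}$ and $\theta^X_{b,a}$, in order to commute $\theta^V$ past these. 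After identifying edges, the pasted diagram collapses to the required pentagon $\theta^{VX}_{cb,a} \circ \theta^{VX}_{c,b} X(a) = \theta^{VX}_{c,ba} \circ VX(c)\theta^{VX}_{b,a}$. This is where I expect the argument to be most delicate, simply because of the number of cells involved.

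Finally, axiom (iv) for $VX$ follows by a two-step commutation: given $\alpha\colon a\To a'$ and $\beta\colon b\To b'$ in $\bfB_2$, applying $V$ to axiom (iv) for $X$ gives the naturality square $V\theta^X_{b',a'} \cdot VX(\beta*\alpha) = V(X(\beta)*X(\alpha)) \cdot V\theta^X_{b,a}$, and axiom (iv) for $V$ applied to $X(\alpha), X(\beta)$ gives $\theta^V_{X(b'),X(a')} \cdot V(X(\beta)*X(\alpha)) = (VX(\beta)*VX(\alpha)) \cdot \theta^V_{X(b),X(a)}$. Vertically composing the two squares produces the naturality axiom (iv) for $\theta^{VX}$. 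Thus $(VX, \eta^{VX}, \theta^{VX})$ satisfies all four axioms and is a colax functor.
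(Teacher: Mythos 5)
Your proposal is correct and follows essentially the same route as the paper's proof: each axiom for $VX$ is obtained by pasting the image under $V$ of the corresponding axiom for $X$, the corresponding axiom for $V$ evaluated at images under $X$, together with the naturality axiom (iv) of $\theta^V$ to slide the comparison $2$-cells past $V\eta^X$ and $V\theta^X$. The paper realizes exactly this decomposition as explicit commutative grids (a three-piece diagram for (ii), a $3\times 3$ grid for (iii), and two pasted squares for (iv)), so no further comment is needed.
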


\begin{proof}
It is enough to verify the axioms (i) -- (iv) in Definition \ref{dfn:colax-fun-2cat}.

(i) $((VX)_1, (VX)_2)\colon \bfB(i,j) \ya{(X_1, X_2)}\bfC(X(i),X(j))
\ya{(V_1, V_2)} \bfD(VX(i), VX(j))$ is a functor for all $i, j \in \bfB_0$
as a composite of the functors
$(X_1, X_2)$ and $(V_1, V_2)$.

(ii) For each $a \colon i \to j$ in $\bfB$ we have the following commutative diagram:
$$
\xymatrix@C=45pt{
VX(a)\id_{VX(i)} & VX(a) V(\id_{X(i)}) & VX(a) VX(\id_i)\\
 &    V(X(a)\id_{X(i)}) & V(X(a)X(\id_i))\\
 && VX(a\id_i).
 \ar@{=>}_{VX(a)\et^V_{X(i)}}"1,2";"1,1"
 \ar@{=>}_{VX(a)V(\et^X_{i})}"1,3";"1,2"
 \ar@{=>}_{V(X(a)\et^X_{i})}"2,3";"2,2"
 \ar@{=>}_{\th^V_{X(a), \id_{X(i)}}}"2,2";"1,2"
 \ar@{=>}_{\th^V_{X(a), X(\id_{i})}}"2,3";"1,3"
 \ar@{=>}_{V(\th^X_{a, \id_{i}})}"3,3";"2,3"
 \ar@{=}"2,2";"1,1"
 \ar@{=}"3,3";"2,2"
}
$$
The commutativity of the square follows from the axiom (iv) for $\th^V$.
The remaining commutative diagram is obtained similarly.
These two commutative diagrams verify the axiom (ii) of colax functors.

(iii) For each $i \ya{a} j \ya{b} k \ya{c} l$ in $\bfB$ we have the following commutative diagram:
$$
\xymatrix@C=45pt{
VX(cba) & V(X(c)X(ba)) & VX(c)\cdot VX(ba)\\
V(X(cb)X(a)) & V(X(c)X(b)X(a)) & VX(c) V(X(b)X(a))\\
VX(cb)\cdot VX(a) & V(X(c)X(b)) VX(a) & VX(c)\cdot VX(b) \cdot VX(a),
\ar@{=>}^{V\th^X_{c, ba}}"1,1";"1,2"
\ar@{=>}^{\th^V_{X(c),X(ba)}}"1,2";"1,3"
\ar@{=>}^{V(\th^X_{c,b}\id_{X(a)})}"2,1";"2,2"
\ar@{=>}^{\th^V_{X(c),X(b)X(a)}}"2,2";"2,3"
\ar@{=>}_{V(\th^X_{c,b})VX(a)}"3,1";"3,2"
\ar@{=>}_{\th^V_{X(c), X(b)} VX(a)}"3,2";"3,3"
\ar@{=>}_{V\th^X_{cb,a}}"1,1";"2,1"
\ar@{=>}^{V(\id_{X(c)}\th^X_{b,a})}"1,2";"2,2"
\ar@{=>}^{VX(c)\cdot V\th^X_{b,a}}"1,3";"2,3"
\ar@{=>}_{\th^V_{X(cb),X(a)}}"2,1";"3,1"
\ar@{=>}^{\th^V_{X(c)X(b), X(a)}}"2,2";"3,2"
\ar@{=>}^{VX(c)\th^V_{X(b),X(a)}}"2,3";"3,3"
}
$$
which verifies the axiom (iii) of colax functors.

(iv) Let $a, a' \colon i \to j$; $b, b' \colon j \to k$;
$\al \colon a \To a'$ and $\be \colon b \To b'$ be in $\bfB$.
Then we have the following commutative diagram:
$$
\xymatrix@C=40pt{
VX(ba) & V(X(b)\cdot X(a)) & VX(b)\cdot VX(a)\\
VX(b'a') & V(X(b')\cdot V(a')) & VX(b')\cdot VX(a'),
\ar@{=>}^-{V(\th^X_{b,a})}"1,1";"1,2"
\ar@{=>}^{\th^V_{X(b),X(a)}}"1,2";"1,3"
\ar@{=>}_-{V(\th^X_{b',a'})}"2,1";"2,2"
\ar@{=>}_{\th^V_{X(b'),X(a')}}"2,2";"2,3"
\ar@{=>}_{VX(\be\cdot \al)}"1,1";"2,1"
\ar@{=>}^{V(X\be \cdot X\al)}"1,2";"2,2"
\ar@{=>}^{VX\be \cdot VX\al}"1,3";"2,3"
}
$$
which verifies the axiom (iv) of colax functors.
\end{proof}

\begin{lem}\label{lem:1-morphisms-colax}
Let $X, X' \colon \bfB \to \bfC$ and $V \colon \bfC \to \bfD$ be colax functors and
$(F,\ps) \colon X \to X'$ a $1$-morphism in $\Colax(\bfB, \bfC)$, and
consider the diagram
\begin{equation}
\label{induced-1-morphism}
\vcenter{
\xymatrix@C=9em@R=9em{
VX(i) & VX'(i)\\
VX(j)  & VX'(j).
\ar^{VF(i)} "1,1";"1,2" 
\ar_{VF(j)} "2,1";"2,2" 
\ar_{VX(a)} "1,1";"2,1" 
\ar^{VX'(a)} "1,2";"2,2"
\ar@/^2.5pc/|{V(X'(a)F(i))} "1,1";"2,2" \ar@/_2.5pc/|{V(F(j)X(a))} "1,1";"2,2"
\ar@{=>}^-{\th^V_{X'(a),F(i)}} "1,2"+<-2.5em,-2.5em>;"1,2"
\ar@{=>}^{V\ps(a)} "1,2"+<-6em,-4.7em>;"2,1"+<5.2em,4.5em> 
\ar@{=>}^-{\th^V_{F(j),X(a)}} "2,1"+<2.5em,2.5em> ;"2,1"
}}
\end{equation}
Assume that $\th^V_{d,c}$ are isomorphisms for all $(d,c) \in \com(\bfC)$
$($e.g., that $V$ is a pseudofunctor$)$.
Then we can define a $1$-morphism $\Colax(\bfB, V)(F, \ps):= V(F,\ps) \colon VX \to VX'$
in $\Colax(\bfB, \bfD)$ by
$$\begin{aligned}
V(F,\ps):&=((V(F(i)))_{i\in \bfB_0}, (\ps_V(a))_{a \in \bfB_1}),
\text{ where for }a:i \to j\\
\ps_V(a):&= \th^V_{F(j), X(a)}\cdot V(\ps(a)) \cdot \th^V_{X'(a), F(i)}{}\inv.
\end{aligned}
$$
\end{lem}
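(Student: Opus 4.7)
The plan is to verify that $V(F,\ps) = \bigl((VF(i))_{i\in \bfB_0}, (\ps_V(a))_{a\in \bfB_1}\bigr)$ satisfies the three conditions from Definition \ref{dfn:colax-fun-2cat}(4) defining a 1-morphism in $\Colax(\bfB, \bfD)$: the naturality property (0) for $\ps_V$, the unit axiom (a), and the composition axiom (b). The invertibility hypothesis on each $\th^V_{d,c}$ is used at the outset to make the formula $\ps_V(a) = \th^V_{F(j),X(a)} \cdot V\ps(a) \cdot (\th^V_{X'(a),F(i)})^{-1}$ well-defined; geometrically, $\ps_V(a)$ is the pasting of diagram \eqref{induced-1-morphism}. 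For property (0), given $\al \colon a \To b$ in $\bfB(i,j)$, I would paste $V(\ps(a))$ and $V(\ps(b))$ into a single diagram and compare the two routes. This reduces, via naturality of $\th^V$ (axiom (iv) for $V$) applied once to the pair $(\id_{X'(\al)}, \id_{F(i)})$ and once to $(\id_{F(j)}, X(\al))$, combined with $V$ applied to the naturality square \eqref{eq:naturality-psi} for $(F, \ps)$, to the desired equality; the fact that $V$ is a functor on each hom-category (axiom (i) for $V$) makes the passage through $V$ formal.

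For axiom (a), after substituting $\et^{VX}_i = \et^V_{X(i)} \circ V\et^X_i$, and likewise $\et^{VX'}_i$, into the unit diagram for $(VF, \ps_V)$ and unpacking $\ps_V(\id_i)$, the required identity becomes a pasting of $V$ applied to the axiom (a) diagram for $(F, \ps)$, bracketed between the two $\th^V_{F(i), \id_{X(i)}}$ and $\th^V_{\id_{X'(i)}, F(i)}$ components; these are controlled precisely by the left and right unit constraints, axiom (ii) for $V$. Invertibility of $\th^V$ is used to cancel the inverse factor in $\ps_V(\id_i)$.

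The composition axiom (b) is the main obstacle and is a substantial pasting computation. One must establish
\[
\ps_V(ba) \cdot \th^{VX'}_{b,a} VF(i) = VF(k)\,\th^{VX}_{b,a} \cdot \ps_V(b) VX(a) \cdot VX'(b)\ps_V(a)
\]
for each $i \ya{a} j \ya{b} k$ in $\bfB$. After substituting the composite formulas $\th^{VX}_{b,a} = \th^V_{X(b),X(a)} \circ V\th^X_{b,a}$ and similarly for $X'$, and expanding each occurrence of $\ps_V$, both sides become large pastings in $\bfD$ involving three instances of $V\ps$ and several $\th^V$ cells. My strategy is a three-step diagram chase: first apply axiom (iii) for $V$ (the coherence pentagon for $\th^V$) to reorganize the $\th^V$'s surrounding $V(X'(b)X'(a)F(i))$ and $V(F(k)X(b)X(a))$; next apply axiom (iv) for $V$ (naturality of $\th^V$) to slide individual $\th^V$'s past whiskerings by $VF$, $VX$ and $VX'$; finally apply $V$ (functorially, via axiom (i)) to the axiom (b) pasting identity for $(F, \ps)$ in $\Colax(\bfB, \bfC)$. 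The invertibility of each $\th^V_{d,c}$ is used freely to transport 2-cells across the equality. None of the steps is conceptually difficult, but the bookkeeping of cells is the longest part of the verification and is where most of the work lies.
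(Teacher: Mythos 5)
Your proposal takes essentially the same route as the paper's proof: property (0) from naturality of $\th^V$ (axiom (iv) for $V$) together with $V$ applied to the square \eqref{eq:naturality-psi}, axiom (a) from the unit coherence (ii) for $V$ bracketing $V$ of axiom (a) for $(F,\ps)$, and axiom (b) by combining coassociativity (iii) and naturality (iv) of $\th^V$ with $V$ applied to axiom (b) for $(F,\ps)$ --- exactly the two glued diagrams in the paper. The only flaw is the displayed identity you give for axiom (b), which is mistranscribed (the left-hand composite $\ps_V(ba)\cdot\th^{VX'}_{b,a}VF(i)$ does not even typecheck; the correct target is $VF(k)\,\th^{VX}_{b,a}\circ\ps_V(ba)=\ps_V(b)VX(a)\circ VX'(b)\ps_V(a)\circ\th^{VX'}_{b,a}VF(i)$), but your surrounding description makes clear the intended identity and the argument goes through.
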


\begin{proof}
We set $X = (X, \et, \th)$ and $X' = (X',  \et', \th')$ for short.

First, the functor $V_{12} \colon \bfC(X(i), X'(j)) \to \bfD(VX(i), VX'(j))$
sends the commutative square \eqref{eq:naturality-psi}
to the commutative square $(*)$ below
{\footnotesize
$$\vcenter{\xymatrix@R=4.5ex@C=8ex{
VX'(a)\cdot VF(i) &&&VX'(b)\cdot VF(i)\\
&V(X'(a)F(i) )& V(X'(b)F(i))&\\
&V(F(j)X(a)) & V(F(j)X(b))\\
VF(j)\cdot VX(a) &&& VF(j)\cdot VX(b),
\ar@{=>}^{V(X'(\al)F(i))}"2,2";"2,3"
\ar@{=>}_{V(F(j)X(\al))}"3,2";"3,3"
\ar@{=>}_{V(\ps(a))}"2,2";"3,2"
\ar@{=>}^{V(\ps(b))}"2,3";"3,3"
\ar@{}|{(*)}"2,2";"3,3"
\ar@{=>}^{VX'(\al)*VF(i)}"1,1";"1,4"
\ar@{=>}_{VF(j)*VX(\al)}"4,1";"4,4"
\ar@{=>}_{\ps_V(a)}"1,1";"4,1"
\ar@{=>}^{\ps_V(b)}"1,4";"4,4"
\ar@{=>}_{\th^V_{X'(a),F(i)}}"2,2";"1,1"
\ar@{=>}^(0.4){\th^V_{F(j),X(a)}}"3,2";"4,1"
\ar@{=>}^{\th^V_{X'(b),F(i)}}"2,3";"1,4"
\ar@{=>}^(0.4){\th^V_{F(j),X(b)}}"3,3";"4,4"
\save "1,2"+<11.5ex,-2.5ex>*{\text{\tiny (iv)}} \restore
\save "4,2"+<11.5ex,2.5ex>*{\text{\tiny (iv)}} \restore
\save "2,1"+<9ex,-4ex>*{\text{\tiny (definition)}} \restore
\save "2,4"+<-9ex,-4ex>*{\text{\tiny (definition)}} \restore
}}
$$
}\noindent
which is completed to the commutative diagram above.
Hence the family $(\ps_V(a))_{a \in \bfB_1}$ has the property (0)
of 1-morphisms in $\Colax(\bfB, \bfD)$ (Definition \ref{dfn:colax-fun-2cat}(4)).

(a) For each $i \in \bfB_0$ we have the following commutative diagram:
{\footnotesize
$$
\xymatrix@C=3.5em{
VX'(\id_i)\cdot VF(i) & V(X'(\id_i)\cdot F(i)) & V(F(i)\cdot X(\id_i)) & VF(i)\cdot VX(\id_i)\\
V(\id_{X'(i)})\cdot VF(i) &V(\id_{X'(i)}\cdot F(i)) & V(F(i)\cdot \id_{X(i)}) & VF(i)\cdot V(\id_{X(i)})\\
\id_{VX'(i)}\cdot VF(i) &&& VF(i)\cdot \id_{VX(i)},
\ar@{=>}_{\th^V_{X'(\id_i),F(i)}}"1,2";"1,1"
\ar@{=>}^{V\ps(\id_i)}"1,2";"1,3"
\ar@{=>}^{\th^V_{F(i),X(\id_i)}}"1,3";"1,4"
\ar@{=>}^-{\th^V_{\id_{X'(i)},F(i)}}"2,2";"2,1"
\ar@{=}"2,2";"2,3"
\ar@{=>}_-{\th^V_{F(i),\id_{X(i)}}}"2,3";"2,4"
\ar@{=}"3,1";"3,4"
\ar@{=>}^{V\et'_i\cdot VF(i)}"1,1";"2,1"
\ar@{=>}^{V(\et'_i\id_{F(i)})}"1,2";"2,2"
\ar@{=>}^{V(\id_{F(i)}\et_i)}"1,3";"2,3"
\ar@{=>}_{VF(i)\cdot V\et_i}"1,4";"2,4"
\ar@{=>}^{\et^V_{X'(i)}\cdot VF(i)}"2,1";"3,1"
\ar@{=>}_{VF(i)\cdot \et^V_{X(i)}}"2,4";"3,4"
}
$$
}
which verifies the axiom (a) of 1-morphisms.

(b) For each $i \ya{a} j \ya{b} k$ in $\bfB$ we have the following commutative diagrams:
{\tiny
$$
\xymatrix{
VX'(ba)\cdot VF(i) & V(X'(b)X'(a))\cdot VF(i) & VX'(b)\cdot VX'(a)\cdot VF(i)\\
&& VX'(b)V(X'(a)F(i)) &VX'(b)\cdot V(F(j)X(a))\\
&&& VX'(b)\cdot VF(j)\cdot VX(a)\\
V(X'(ba)F(i)) & V(X'(b)X'(a)F(i)) & V(X'(b)F(j)X(a)) & V(X'(b)F(j))VX(a)
\ar@{=>}^{V(\th'_{b,a})VF(i)}"1,1";"1,2"
\ar@{=>}^{\th^V_{X'(b),X'(a)}VF(i)}"1,2";"1,3"
\ar@{=>}^{V(\id_{X'(b)})V(\ps(a))}"2,3";"2,4"
\ar@{=>}^-{V(\th'_{b,a}F(i))}"4,1";"4,2"
\ar@{=>}^{V(X'(b)\ps(a))}"4,2";"4,3"
\ar@{=>}^{\th^V_{X'(b)F'(j),X(a)}}"4,3";"4,4"
\ar@{=>}_{VX'(b)\th^V_{X'(a),F(i)}{}\inv}"1,3";"2,3"
\ar@{=>}^{VX'(b)\cdot \th^V_{F(j),X(a),}}"2,4";"3,4"
\ar@{=>}^{\th^V_{X'(b),F(j)}{\inv VX(a)}}"3,4";"4,4"
\ar@{=>}^{\th^V_{X'(ba),F(i)}}"4,1";"1,1"
\ar@{=>}^{\th^V_{X'(b)X'(a),F(i)}}"4,2";"1,2"
\ar@{=>}^{\th^V_{X(b),X'(a)F(i)}}"4,2";"2,3"
\ar@{=>}^{\th^V_{X'(b),F(j)X(a)}}"4,3";"2,4"
}
$$
}
and
{\tiny
$$
\xymatrix{
V(X'(ba)F(i)) & V(X'(b)X'(a)F(i)) & V(X'(b)F(j)X(a)) & V(X'(b)F(j))VX(a)\\
V(F(k)X(ba)) &   &  V(F(k)X(b)X(a)) & V(F(k)X(b))VX(a)\\
VF(k)\cdot VX(ba) && VF(k)\cdot V(X(b)X(a)) & VF(k)\cdot VX(b) \cdot VX(a).
\ar@{=>}^-{V(\th'_{b,a}F(i))}"1,1";"1,2"
\ar@{=>}^{V(X'(b)\ps(a))}"1,2";"1,3"
\ar@{=>}^{\th^V_{X'(b)F'(j),X(a)}}"1,3";"1,4"
\ar@{=>}^{V(F(k)\th_{b,a})}"2,1";"2,3"
\ar@{=>}^{\th^V_{X(b)F'(j),X(a)}}"2,3";"2,4"
\ar@{=>}"1,3";"1,4"
\ar@{=>}_{VF(k)\cdot V\th_{b,a}}"3,1";"3,3"
\ar@{=>}_{VF(k)\cdot \th^V_{X(b),X(a)}}"3,3";"3,4"
\ar@{=>}_{V\ps(ba)}"1,1";"2,1"
\ar@{=>}_{V(\ps(b)X(a))}"1,3";"2,3"
\ar@{=>}^{V(\ps(b))VX(a)}"1,4";"2,4"
\ar@{=>}_{\th^V_{F(k),X(ba)}}"2,1";"3,1"
\ar@{=>}_{\th^V_{F(k),X(b)X(a)}}"2,3";"3,3"
\ar@{=>}^{\th^V_{F(k),X(b)}VX(a)}"2,4";"3,4"
}
$$
}
Glue these two diagrams together along the common row to get a large diagram,
which verifies the axiom (b) of 1-morphisms.
\end{proof}

\begin{lem}
Let $X, X' \colon \bfB \to \bfC$ and $V \colon \bfC \to \bfD$ be colax functors,
$(F,\ps), (F', \ps') \colon X \to X'$ $1$-morphisms, and
$\al \colon (F, \ps) \To (F', \ps')$ a $2$-morphism in $\Colax(\bfB, \bfC)$.
Assume that all $\th^V_{d,c}$ are isomorphisms $($e.g., that $V$ is a pseudofunctor$)$.
Then we can define a $2$-morphism $\Colax(\bfB, V)(\al):= V\al \colon V(F,\ps) \To V(F', \ps')$ 
in $\Colax(\bfB, \bfD)$ by
$$
V\al := (V\al_i)_{i\in \bfB_0}.
$$
\end{lem}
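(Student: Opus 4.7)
The plan is to verify that the family $V\al := (V\al_i)_{i\in \bfB_0}$ satisfies the unique axiom required of a $2$-morphism in $\Colax(\bfB,\bfD)$ from $V(F,\ps)$ to $V(F',\ps')$: for each $a\colon i\to j$ in $\bfB_1$, the square
$$
\xymatrix@C=5pc{
VX'(a)\cdot VF(i) & VX'(a)\cdot VF'(i)\\
VF(j)\cdot VX(a) & VF'(j)\cdot VX(a)
\ar@{=>}^{VX'(a)\cdot V\al_i} "1,1";"1,2"
\ar@{=>}^{V\al_j\cdot VX(a)} "2,1";"2,2"
\ar@{=>}_{\ps_V(a)} "1,1";"2,1"
\ar@{=>}^{\ps'_V(a)} "1,2";"2,2"
}
$$
commutes. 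Since $V$ is well-defined on 1-cells and 2-cells (by the preceding two lemmas and the hypothesis that all $\th^V_{d,c}$ are invertible), the only thing to check is this single commutativity.

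First I would unpack $\ps_V(a)=\th^V_{F(j),X(a)}\cdot V(\ps(a))\cdot\th^V_{X'(a),F(i)}{}\inv$ and likewise for $\ps'_V(a)$, so that the target square is the vertical pasting of three sub-squares stacked between $VX'(a)\cdot VF(i)\to VX'(a)\cdot VF'(i)$ at the top and $VF(j)\cdot VX(a)\to VF'(j)\cdot VX(a)$ at the bottom: the top sub-square translates horiziontally from $VX'(a)\cdot V\al_i$ to $V(X'(a)\al_i)$ via the isomorphisms $\th^V_{X'(a),F(i)}$ and $\th^V_{X'(a),F'(i)}$; the middle sub-square has horizontal arrows $V(X'(a)\al_i)$ and $V(\al_j X(a))$ with vertical arrows $V(\ps(a))$ and $V(\ps'(a))$; and the bottom sub-square translates from $V(\al_j X(a))$ to $V\al_j\cdot VX(a)$ via $\th^V_{F(j),X(a)}$ and $\th^V_{F'(j),X(a)}$.

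Next, the top and bottom sub-squares commute by axiom (iv) of the colax functor $V$ applied respectively to the 2-morphisms $X'(a)\al_i\colon X'(a)F(i)\To X'(a)F'(i)$ and $\al_jX(a)\colon F(j)X(a)\To F'(j)X(a)$ in $\bfC$; the top square uses the inverted form of (iv), which is legitimate because each $\th^V$ appearing here is an isomorphism. For the middle sub-square, the hypothesis that $\al$ is a 2-morphism in $\Colax(\bfB,\bfC)$ gives the identity $\ps'(a)\circ(X'(a)\al_i)=(\al_jX(a))\circ\ps(a)$ in the hom-category $\bfC(X(i),X'(j))$; applying the functor $(V_1,V_2)\colon \bfC(X(i),X'(j))\to\bfD(VX(i),VX'(j))$ from axiom (i) of $V$ preserves this equality, yielding precisely the commutativity of the middle sub-square.

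Pasting the three commutative sub-squares produces the required commutativity, completing the verification. There is no real conceptual obstacle here; the main thing to be careful about is the orientation of the $\th^V$ isomorphisms in the top sub-square (their inverses appear in the definition of $\ps_V(a)$) and the bookkeeping needed to glue the three squares along their shared edges $V(X'(a)\al_i)$ and $V(\al_j X(a))$.
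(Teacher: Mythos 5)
Your proof is correct and follows essentially the same route as the paper: decompose the required square into three sub-squares via the definition of $\ps_V(a)$, establish the middle one by applying the hom-functor $(V_1,V_2)$ to the defining square of $\al$, and the two outer ones by axiom (iv) of the colax functor $V$. No discrepancies to report.
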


\begin{proof}
Let $a\colon i \to j$ be in $\bfB$.
It is enough to show the commutativity of the following diagram:
$$
\xymatrix@C=3em{
VX'(a)\cdot VF(i) & V(X'(a)F(i)) & V(F(j)X(a)) & VF(j)\cdot VX(a)\\
VX'(a)\cdot VF'(i) & V(X'(a)F'(i)) & V(F'(j)X(a)) & VF'(j)\cdot VX(a).
\ar@{=>}_{\th^V_{X'(a),F(i)}}"1,2";"1,1"
\ar@{=>}^{V(\ps(a))}"1,2";"1,3"
\ar@{=>}^{\th^V_{F(j),X(a)}}"1,3";"1,4"
\ar@{=>}^{\th^V_{X'(a),F'(i)}}"2,2";"2,1"
\ar@{=>}_{V(\ps'(a))}"2,2";"2,3"
\ar@{=>}_{\th^V_{F'(j),X(a)}}"2,3";"2,4"
\ar@{=>}_{VX'(a)\cdot V\al_i}"1,1";"2,1"
\ar@{=>}_{V(X'(a)\al_i)}"1,2";"2,2"
\ar@{=>}^{V(\al_jX(a))}"1,3";"2,3"
\ar@{=>}^{V\al_i\cdot VX(a)}"1,4";"2,4"
}
$$
Since $\al = (\al_i \colon F(i) \To F'(i))_{i\in \bfB_0}$ is a 2-morphism in $\Colax(\bfB, \bfC)$,
we have the commutative diagram
$$
\xymatrix{
X'(a)F(i) & F(j)X(a)\\
X'(a)F'(i) & F'(j)X(a).
\ar@{=>}^{\ps(a)}"1,1";"1,2"
\ar@{=>}_{\ps'(a)}"2,1";"2,2"
\ar@{=>}_{X'(a)\al_i}"1,1";"2,1"
\ar@{=>}^{\al_jX(a)}"1,2";"2,2"
}
$$
This gives the commutativity of the central square of the diagram above
by applying the functor $(V_1, V_2)$ to it.
The axiom (iv) of colax functors for $V$ shows the commutativity of the remaining squares.
\end{proof}

\subsection{Proof of Theorem \ref{comp-pseudofun}}
By the three lemmas above we can define a correspondence
$$
\Colax(\bfB, V)_{012} \colon \Colax(\bfB, \bfC) \to \Colax(\bfB, \bfD)
$$
sending $i$-cells to $i$-cells for all $i = 0, 1, 2$ preserving domains and codomains.
It remains to define families $H = (H_X)_{X\in \Colax(\bfB, \bfC)_0}$ and
$\Th = (\Th_{F',F})_{(F',F) \in \com(\Colax(\bfB, \bfC))}$
and to show that $\Colax(\bfB, V):= (\Colax(\bfB, V)_{012}, H, \Th)$ becomes a pseudofunctor
$\Colax(\bfB, \bfC) \to \Colax(\bfB, \bfD)$.

For each $X\in \Colax(\bfB, \bfC)_0$ we define $H_X \colon V(\id_X) \To \id_{VX}$ by setting
$$H_X:= (\et_{X(i)}^V \colon V(\id_{X(i)}) \to \id_{VX(i)})_{i\in \bfB_0}.$$
Then $H_X$ turns out to be a 2-morphism
because by definitions of $\th^V$ and $\et^V$ we have a commutative diagram
$$
\xymatrix{
VX(a)\cdot V(\id_{X(i)} & V(X(a)\cdot \id_{X(i)}) & V(\id_{X(j)}X(a)) & V(\id_{X(j)})\cdot VX(a)\\
VX(a)\cdot \id_{VX(i)} &&& \id_{VX(j)}VX(a)
\ar@{=>}^{(\th_{X(a),\id_{X(i)}}^V)\inv}"1,1";"1,2"
\ar@{=}"1,2";"1,3"
\ar@{=>}^{\th_{\id_{X(j)},X(a)}^V}"1,3";"1,4"
\ar@{=}"2,1";"2,4"
\ar@{=>}^{VX(a)\cdot \et_{X(i)}^V}"1,1";"2,1"
\ar@{=>}_{\et_{X(j)}^V\cdot VX(a)}"1,4";"2,4"
}
$$
for all $a \colon i \to j$ in $\bfB$.
Note that $H_X$ are isomorphisms because $\et_k^V$ are for all $k \in \bfC_0$.

For each $(F',F) \in \com(\Colax(\bfB, \bfC))$, say
$F \colon X \To X'$ and $F' \colon X' \To X''$,
we define $\Th_{F',F} \colon V(F'F) \To VF'\circ VF$ by setting
$$
\Th_{F',F}:= (\th_{F'(i), F(i)}^V \colon V(F'(i)F(i)) \to VF'(i)\cdot VF(i))_{i\in \bfB_0}.
$$
Then $\Th_{F',F}$ turns out to be a 2-morphism.
Indeed, it is enough to show the commutativity of the diagram
$$
\xymatrix{
VX''(a)\cdot V(F'(i)F(i)) & V(F'(j)F(j))\cdot VX(a)\\
VX''(a)\cdot VF'(i) \cdot VF(i) & VF'(j)VF(j) VX(a)
\ar@{=>}^{\Ps(a)}"1,1";"1,2"
\ar@{=>}_{\Ps'(a)}"2,1";"2,2"
\ar@{=>}_{VX''(a)\cdot \th_{F'(i), F(i)}^V}"1,1";"2,1"
\ar@{=>}^{\th_{F'(j),F(j)}^V\cdot VX(a)}"1,2";"2,2"
}
$$
for all $a \colon i \to j$ in $\bfB$, where we set
$V(F'F) = ((V(F'(i)F(i))_{i\in \bfB_0}, (\Ps(a))_{a\in \bfB_1})$
and $VF'\cdot VF = ((VF(i)\cdot VF(i))_{i\in \bfB_0}, (\Ps'(a))_{a\in \bfB_1})$,
namely
$$
{\footnotesize
\begin{aligned}
\Ps(a) &:= \th_{F'(j)F(j),X(a)}^V\cdot V((F'(j)\cdot \ps(a)) \cdot V(\ps'(a)\cdot F(i))
\cdot ({\th'}_{X''(a), F'(i)F(i)}^V)\inv\\
\Ps'(a) &:= (VF'(j) \cdot (\th_{F(j),X(a)}^V\cdot V\ps(a)\cdot ({\th'}_{X'(a),F(i)}^V)\inv) \circ
(\th_{F'(j),X'(a)}^V\cdot V\ps'(a)\cdot ({\th'}_{X''(a),F'(i)}^V)\inv\cdot VF(i))
\end{aligned}
}$$
for all $a \colon i \to j$ in $\bfB$.
This follows from the coassociativity of $V$ and the naturality of $\th^V$.
Note that $\Th_{F',F}$ are isomorphisms because $\th_{b,a}^V$ are for all $a, b\in \bfC_0$.

Now the defining conditions of $\th^V$ and $\et^V$ directly show
that $(\Colax(\bfB, V)_{012}, H, \Th)$ is a colax functor, hence
a pseudofunctor because all $H_X$ and $\Th_{F',F}$ are isomorphisms.
\qed

\end{document}